\newtheorem{thm}{Theorem}[section]
\newtheorem{conj}[thm]{Conjecture}
\newtheorem{lem}[thm]{Lemma}
\theoremstyle{definition}
\theoremstyle{remark}
\newtheorem{rem}[thm]{Remark}
\newtheorem{ex}[thm]{Example}
\numberwithin{equation}{section}
\begin{document}
	\title[On the $p$-adic valuation of third order linear recurrence sequences]
	{On the $p$-adic valuation of third order linear recurrence sequences}
	
	\author[Deepa Antony]{Deepa Antony (ORCID: 0000-0002-4214-2889)}
	
	\address{
		Department of Mathematics \\
		Indian Institute of Technology Guwahati \\
		Assam, India, PIN- 781039}
	
	\email{deepa172123009@iitg.ac.in}
	
	\author[Rupam Barman]{Rupam Barman (ORCID: 0000-0002-4480-1788)}
	\address{Department of Mathematics \\
		Indian Institute of Technology Guwahati \\
		Assam, India, PIN- 781039}
	\email{rupam@iitg.ac.in}
	
	\date{October 16, 2024}

	\subjclass[2010]{Primary 11B39, 11B50}
	
	\keywords{Tribonacci sequence; Tripell sequence; linear recurrence sequence; $p$-adic valuation; Diophantine equation}
	
	\begin{abstract}
	In a recent paper, Bilu et al. studied a conjecture of Marques and Lengyel on the $p$-adic valuation of the Tribonacci sequence. In this article, we study the $p$-adic valuation of third order linear recurrence sequences by considering a generalisation of the conjecture of Marques and Lengyel for third order linear recurrence sequences. Suppose that $(x_n)$ is a  third order linear recurrence sequence whose characteristic polynomial has a root $\gamma$ such that $|\gamma|>1$. We show that if there exists a prime $p$ for which the conjecture holds for $(x_n)$, then the  solution set of the Diophantine equation given by $x_n=m!$ in positive integers $n,m$ is finite. We also show that the solutions can be effectively computed when the form of the  conjecture is explicitly known. 
	\end{abstract}
	
	\maketitle
	\section{Introduction and statement of results} 
	For a prime $p$ and a non-zero integer $n$, the $p$-adic valuation of $n$, denoted $\nu_p(n)$, is defined as the highest power of $p$ which divides $n$. We set $\nu_p(0):=\infty$. It is an interesting problem to calculate $p$-adic valuation of integers having special properties, for example, linear recurrence sequences \cite{Bilu,Tripell, Lengyel,Len,Marques, Sanna1,Young}, Stirling numbers \cite{miska}, values assumed by functions \cite{medina} etc. Knowing the $p$-adic valuation is useful in solving certain Diophantine equations involving linear recurrence sequences and is also important in the investigation of denseness of such numbers in the $p$-adic numbers. In \cite{Lengyel}, Lengyel  completely characterized the $p$-adic valuation of the Fibonacci sequence. Later, Sanna \cite{Sanna1} solved the problem for any second order linear recurrence sequence with initial values $x_0=0$ and $x_1=1$. 
	\par In this article, we will  be interested in the $p$-adic valuation of linear recurrence sequences. It is an  interesting problem to characterise the $p$-adic valuation of a third order linear recurrence sequence. There are several generalizations of Fibonacci numbers. One of the most well-known is the Tribonacci sequence $(T_n)$, which is defined by the recurrence $T_{n+1}=T_{n}+T_{n-1}+T_{n-2}$, with initial values $T_0 = 0$ and $T_1 = T_2 = 1$. In \cite{Marques}, Marques and Lengyel calculated the $2$-adic valuation of the Tribonacci numbers $T_n$, and used their result to solve the Diophantine equation $T_n=m!$ in positive integers $n,m$. Later, the $2$-adic valuations of some other generalizations of Fibonacci numbers where calculated, see for example \cite{Len, sobo, Young}. Tripell sequences were also considered by Bravo et al. in \cite{Tripell}. 
	\par In \cite[Conjecture 8]{Marques},  Marques and Lengyel conjectured that the $p$-adic valuation of a Tribonacci number $T_n$ is either constant or is linearly dependent on the $p$-adic valuation of the index $n$ of the terms of the sequence for $n$ in some congruence class. However,  in \cite[Theorem 1.5]{Bilu} Bilu et al. showed that the conjecture fails for a specific infinite set of primes of relative density $1/12$. The aim of this article is to consider the conjecture for any third order linear recurrence sequence.  For any third order linear recurrence sequence $(x_n)$, we restate the conjecture as follows. We restate \cite[Conjecture 1.2]{Bilu} which is equivalent to \cite[Conjecture 8]{Marques}.
	\begin{conj}\label{con}\cite[Conjecture 1.2]{Bilu} Let $(x_n)$ be a third order linear recurrence sequence. Let $p$ be a prime number. There exists a positive integer $Q$ such that for every $i\in\{0,1,\dots,Q-1\}$ we have one of the following two options.
	\begin{enumerate}
		\item[\emph{(C)}] There exists $\kappa_i\in\mathbb{Z}_{\geq 0}$ such that for all but finitely many $n\in \mathbb{Z}$ satisfying $n\equiv i\pmod{Q}$ we have $\nu_p(x_n)=\kappa_i$.
		\item[\emph{(L)}] There exists $$a_i\in\mathbb{Z}, ~~~~\kappa_i\in\mathbb{Z}, ~~~~ \mu_i\in\mathbb{Z}_{>0}$$ 
		satisfying $$\nu_p(a_i-i)\geq \nu_p(Q),$$
		such that for all but finitely many $n\in \mathbb{Z}$ satisfying $n\equiv i\pmod{Q}$ we have $$\nu_p(x_n)=\kappa_i+ \mu_i \nu_p(n-a_i).$$
	\end{enumerate}
\end{conj}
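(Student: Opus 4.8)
The plan is to adapt the $p$-adic analytic method of Bilu et al.\ \cite{Bilu} to a general ternary recurrence. Write the characteristic polynomial as $f(X)=X^{3}-c_{1}X^{2}-c_{2}X-c_{3}$ and fix a finite extension $K/\mathbb{Q}_{p}$ containing all of its roots $\alpha,\beta,\gamma$, with ring of integers $\mathcal{O}$, uniformizer $\pi$, ramification index $e$, and residue field of cardinality $q$. Two routine reductions come first. If $f$ has a repeated root, then in the Binet-type expansion of $x_{n}$ one coefficient is a polynomial of degree $\le 1$ in $n$, and the contribution of that block to $\nu_{p}$ is controlled directly; so we may assume $\alpha,\beta,\gamma$ are distinct and $x_{n}=A\alpha^{n}+B\beta^{n}+C\gamma^{n}$ with $A,B,C\in K$. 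If $p\mid c_{3}$, the Newton polygon of $f$ shows that some root has positive valuation (the root valuations sum to $\nu_{p}(c_{3})$), and then one term of the Binet formula can dominate so that $\nu_{p}(x_{n})$ grows linearly in $n$ along a residue class --- behaviour that is neither (C) nor (L); this is exactly the degeneracy that the hypotheses defining \emph{certain third order linear recurrence sequences} should rule out, so I would assume $p\nmid c_{3}$, which forces $\alpha,\beta,\gamma$ to be $p$-adic units.

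Next I would take $Q$ to be a common multiple of $q-1$ and a sufficiently large power of $p$. On a residue class $n\equiv i\pmod{Q}$ write $n=i+Qt$ with $t\in\mathbb{Z}$; since $\alpha^{Q},\beta^{Q},\gamma^{Q}\equiv 1\pmod{\pi}$, writing $\alpha^{Q}=1+\pi^{r_{\alpha}}u_{\alpha}$ (and similarly for $\beta,\gamma$) and expanding by the binomial series yields
\begin{equation*}
x_{i+Qt}=A\alpha^{i}(1+\pi^{r_{\alpha}}u_{\alpha})^{t}+B\beta^{i}(1+\pi^{r_{\beta}}u_{\beta})^{t}+C\gamma^{i}(1+\pi^{r_{\gamma}}u_{\gamma})^{t}=:F_{i}(t),
\end{equation*}
a power series in $t$ converging on all of $\mathcal{O}$. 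Strassmann's theorem bounds the number of zeros of $F_{i}$ in $\mathcal{O}$. If $F_{i}$ has no zero in $\mathbb{Z}_{p}$, its Newton polygon shows that $\nu_{\pi}(F_{i}(t))$ is eventually constant for $t\in\mathbb{Z}$, giving option (C). If $F_{i}$ has a zero $\tau\in\mathbb{Z}_{p}$, which one arranges to be simple, then near $\tau$ one has $\nu_{\pi}(F_{i}(t))=\nu_{\pi}(F_{i}'(\tau))+\nu_{\pi}(t-\tau)$; replacing $\tau$ by a sufficiently close rational integer $\widetilde{\tau}$ (harmless for all but finitely many $n$) and setting $a_{i}:=i+Q\widetilde{\tau}$, so that $\nu_{p}(a_{i}-i)\ge\nu_{p}(Q)$, and using $\nu_{p}(n-a_{i})=\nu_{p}(Q)+\nu_{p}(t-\widetilde{\tau})$, one divides $\nu_{\pi}$ by $e$, checks integrality, and reads off $\kappa_{i}$ and a positive integer slope $\mu_{i}$, giving option (L).

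The delicate part, and the place where the restriction to \emph{certain} sequences is needed, is the case in which two of the roots are $p$-adically very close to each other while the third is not. Then $F_{i}(t)$ splits as the contribution of the close pair plus that of the far root, and whether $\nu_{\pi}(F_{i}(t))$ is eventually constant or grows with $\nu_{p}(t-\tau)$ hinges on comparing the valuation of the small close-pair combination with that of the far term; this comparison can change sign as $i$ varies, and controlling it uniformly forces a discriminant- or resultant-type nondegeneracy condition (roughly, that congruences such as $\alpha\equiv\beta$ and $\alpha^{Q}\equiv 1$ do not hold simultaneously to too high $p$-adic precision), which I expect to be the main technical obstacle. A second subtle point, already present for the Tribonacci prime $p=2$ where $f$ defines a totally ramified cubic extension with $e=3$, is to show that after descending from $K$ to $\mathbb{Q}_{p}$ the exponents $\kappa_{i}$ and $\mu_{i}$ really are integers; here one must check that the contributions of the three conjugate roots combine so that $\nu_{\pi}(x_{n})$ is divisible by $e$ throughout the relevant range, exactly as in the corresponding argument of \cite{Bilu}.
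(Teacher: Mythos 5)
This statement is a \emph{conjecture}, not a theorem: the paper offers no proof of it, and indeed the main content of the paper (and of Bilu et al., to whom the conjecture is attributed) is that the statement is \emph{false} for many primes --- Bilu et al.\ showed it fails for infinitely many $p$ for the Tribonacci sequence, and Theorem \ref{mtri} of this paper shows it fails for most primes up to $250$ for the Tripell and modified Tripell sequences. So no proof attempt can succeed in the stated generality, and your sketch must contain an error.

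The error is concentrated in one sentence: ``replacing $\tau$ by a sufficiently close rational integer $\widetilde{\tau}$ (harmless for all but finitely many $n$).'' This replacement is not harmless. If the zero $\tau\in\mathbb{Z}_p$ of $F_i$ is irrational (or, more precisely, does not lie in $N^{-1}\mathbb{Z}$ in the normalization of Theorem \ref{cod}), then for any integer $\widetilde{\tau}$ with $\nu_p(\tau-\widetilde{\tau})=M$ the quantities $\nu_p(t-\tau)$ and $\nu_p(t-\widetilde{\tau})$ disagree for every integer $t$ in the congruence class $t\equiv\widetilde{\tau}\pmod{p^M}$ --- an infinite set, not a finite one. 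Consequently $\nu_p(x_n)=\nu_\pi(F_i'(\tau))/e+\nu_p(t-\tau)+\text{const}$ cannot be rewritten in the form $\kappa_i+\mu_i\nu_p(n-a_i)$ with $a_i\in\mathbb{Z}$ outside a finite exceptional set; no choice of $a_i$ works. This is exactly the dichotomy recorded in Theorem \ref{cod}: the conjecture holds for $p$ if and only if every zero of every $f_\ell$ lies in $N^{-1}\mathbb{Z}$ (equivalently, comes from a twisted integral zero), and the zeros produced by Hensel's lemma are generically irrational $p$-adic integers. Theorem \ref{T1} of the paper is precisely a criterion for detecting such a ``bad'' zero and thereby disproving the conjecture for a given $p$. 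The rest of your setup (Binet form, reduction to unit roots via $p\nmid c_3$, the series $F_i(t)$ on residue classes, Strassmann and Hensel) does match the framework the paper works in; it is only the final descent from a $p$-adic zero to an integer parameter $a_i$ that cannot be carried out in general.
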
 
 For $a,b,c\in\mathbb{Z}$, and a prime $p$, we consider a third order linear recurrence sequence $(x_n)$ for $n\in\mathbb{Z}$ defined as 
\begin{align}\label{rec}
x_n=ax_{n-1}+bx_{n-2}+cx_{n-3} \text{ with  }x_0,x_1,x_2\in \mathbb{Z} \text{ and } p\nmid c.
\end{align}
The characteristic polynomial of the linear recurrence sequence \eqref{rec} is given by the polynomial $P(x)=x^3-ax^2-bx-c$. A linear recurrence sequence is said to be degenerate if its characteristic polynomial has a pair of distinct roots whose ratio is a root of unity. Otherwise it is said to be non-degenerate. Suppose that $(x_n)$ is a simple sequence, i.e, $P(x)$ has distinct roots. Let $\mathbb{K}=\mathbb{Q}_p(\lambda_1,\lambda_2,\lambda_3)$ be the splitting field of $P(x)$ over $\mathbb{Q}_p$, where $\lambda_1,\lambda_2,\lambda_3$ are the distinct roots of $P(x)$ and $\mathbb{Q}_p$ is the field of $p$-adic numbers. Let $\mathcal{O}$ be the  ring of integers of $\mathbb{K}$. Throughout the article, we consider primes $p$ which do not divide the discriminant of the characteristic polynomial so that $\mathbb{K}$ is unramified over $\mathbb{Q}_p$. 
\par 
For $n\geq 0$, the $n$th term of the sequence $(x_n)$ is given by the formula
\begin{align}\label{term}
	x_n=\sum_{i=1}^{3}c_{\lambda_i}\lambda_ i^n ,\text{ where }c_{\lambda_i}=q(\lambda_i)/P'(\lambda_i),
\end{align}
where $q(x):=x_0x^2+(x_1-x_0a)x+(x_2-x_1a-x_0b)$.
Since $p$ does not divide the discriminant of the polynomial, we have, $c_{\lambda_i}\in\mathcal{O}$. By our assumption, $p$ does not divide $c$ which implies that the roots $\{\lambda_1,\lambda_2,\lambda_3\}\subset \mathcal{O}^{\times}$.
\par 
In this article, we consider non-degenerate third order linear recurrence sequences $(x_n)$ given by \eqref{rec} whose $n$th term satisfies the equation
\begin{align}\label{equal}
x_n=\sum_{i=1}^{3}c_{\lambda_i}\lambda_ i^n  ~~~~\text{for all} ~~n\in\mathbb{Z}.
\end{align}
For $n\in\mathbb{Z}$, we say that $n$ is a zero of $(x_n)$ if $$\sum_{i=1}^{3}c_{\lambda_i}\lambda_ i^n=0.$$ 
Bilu et al. introduced the notion of a \emph{Twisted Integral Zero} (TIZ), denoted by  $\mathcal{Z}$, which is defined as the set of integers $n$ such that $$\sum_{i=1}^{3}\xi_i c_{\lambda_i}\lambda_ i^n=0$$ for some roots of unity $\xi_1,\xi_2,\xi_3$. A systematic study of Twisted Zeros of the Tribonacci sequence can be found in \cite{Bilu-1}.
\par For a positive integer $n$, let $N_{p^n}$ be the order of the subgroup $\left<\lambda_1,\lambda_2,\lambda_3\right>$ in the multiplicative group  $(\mathcal{O}/p^n)^{\times}$. For $\ell\in\{0,1,\dots,N_{p^n}-1\}$, consider the analytic function $f_\ell:\mathbb{Z}_p\rightarrow \mathbb{Z}_p$ defined by
\begin{align}\label{f}
f_\ell(x)=\sum_{i=1}^{3}c_{\lambda_i}\lambda_i^\ell\exp_p(x\log_p(\lambda_i^N))\text{, where }N=N_{p^n}.
\end{align}
Here, $\mathbb{Z}_p$ denotes the ring of $p$-adic integers in $\mathbb{Q}_p$.
 We have, $$f_\ell(m)=x_{\ell+mN}\text{ for all  }m\in\mathbb{Z}$$ and it can be seen that $N_{p^n}$ is the period of the sequence modulo $p^n$. 
\par 

It can be verified that the proof of the set of equivalent conditions of Conjecture 1.2 in \cite{Bilu} in terms of the zeros of the functions $f_\ell(x)$ holds true in the case of a third order linear recurrence sequence \eqref{rec} satisfying condition \eqref{equal} as well. The equivalent conditions are as follows. 
\begin{thm}\label{cod}\cite[Theorem 6.1]{Bilu} The following three statements are equivalent.
	\begin{enumerate}
		\item Conjecture \ref{con} holds for the given $p$.
		\item For every $\ell\in \{0,\dots,N-1\}$, the zeros of the function $f_\ell(z)$ belong to $N^{-1}\mathbb{Z}$.
		\item For every $\ell$ the following holds: if $b\in \mathbb{Z}_p$ is a zero of $f_\ell(z)$ then $\ell+Nb\in\mathcal{Z}$.
	\end{enumerate}
\end{thm}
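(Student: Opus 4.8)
The plan is to follow the proof of \cite[Theorem~6.1]{Bilu} essentially verbatim, verifying that every ingredient which Bilu et al.\ use for the Tribonacci sequence remains valid for a non-degenerate third order recurrence \eqref{rec} satisfying \eqref{equal}. The starting point is that, by \eqref{equal}, studying $\nu_p(x_n)$ along the progression $n\equiv\ell\pmod N$ amounts to studying $\nu_p(f_\ell(m))$ for $m\in\mathbb{Z}$, where $n=\ell+mN$ and $f_\ell$ is the function in \eqref{f} (with $N=N_{p^n}$ taken for $n$ large enough that $\log_p$ and $\exp_p$ converge; this forces $n\ge 2$ when $p=2$). Since $\{\lambda_1,\lambda_2,\lambda_3\}\subset\mathcal O^{\times}$ with $\lambda_i^{N}\equiv 1$ modulo the relevant power of $p$, each $\log_p(\lambda_i^{N})$ lies in $p\mathcal O$; hence $f_\ell(x)=\sum_{k\ge 0}a_kx^k$ with $\nu_p(a_k)\to\infty$, so $f_\ell$ is $p$-adic analytic on $\mathbb{Z}_p$ and, by Strassmann's theorem applied over $\mathbb K$, has only finitely many zeros in $\mathbb{Z}_p$ unless it vanishes identically, which the non-degeneracy of \eqref{rec} (via Skolem--Mahler--Lech) rules out. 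Near a zero $b\in\mathbb{Z}_p$ of multiplicity $\mu_\ell$ we write $f_\ell(x)=(x-b)^{\mu_\ell}g(x)$ with $g$ analytic and $g(b)\neq 0$, whence $\nu_p(f_\ell(x))=\mu_\ell\nu_p(x-b)+\nu_p(g(b))$ for $x$ sufficiently close to $b$, while away from all zeros $\nu_p(f_\ell)$ is locally constant and $\ge 0$. These are exactly the analytic facts used in \cite{Bilu}, none of which is special to the Tribonacci sequence.

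Granting this, $(2)\Rightarrow(1)$ proceeds as follows. Assume every zero of every $f_\ell$ lies in $N^{-1}\mathbb{Z}$, and fix $s$ large enough that, for each $\ell$, every closed ball of radius $p^{-s}$ contains at most one zero of $f_\ell$ and $\nu_p(f_\ell)$ is constant on each such ball meeting no zero. Put $Q:=p^{s}N$. A class $n\equiv i\pmod Q$ lies inside a class $n\equiv\ell\pmod N$ and corresponds to a single residue class $m\equiv m_0\pmod{p^{s}}$, i.e.\ a ball of radius $p^{-s}$. If that ball meets no zero we are in case (C) with $\kappa_i=\nu_p(f_\ell(m))\ge 0$; if it contains a zero $b=a'/N\in N^{-1}\mathbb{Z}$, then setting $a_i:=\ell+a'=\ell+Nb\in\mathbb{Z}$ and $\mu_i:=\mu_\ell$ and using $\nu_p(n-a_i)=\nu_p(N)+\nu_p(m-b)$ gives $\nu_p(x_n)=\mu_i\nu_p(n-a_i)+\bigl(\nu_p(g(b))-\mu_i\nu_p(N)\bigr)$, which is case (L); moreover $a_i-i\equiv N(b-m_0)\pmod Q$ with $\nu_p\bigl(N(b-m_0)\bigr)\ge\nu_p(N)+s=\nu_p(Q)$, so $\nu_p(a_i-i)\ge\nu_p(Q)$, exactly as Conjecture~\ref{con} requires. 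For $(1)\Rightarrow(2)$ we argue by contraposition: if some $f_\ell$ has a zero $b\notin N^{-1}\mathbb{Z}$ (possibly $p$-adically irrational), then, $\mathbb{Z}$ being dense in $\mathbb{Z}_p$, infinitely many $m\in\mathbb{Z}$ approach $b$, so for infinitely many $n=\ell+mN$ lying in one and the same class modulo any prescribed $Q$ we have $\nu_p(x_n)=\mu_\ell\nu_p\bigl(n-(\ell+Nb)\bigr)-\mu_\ell\nu_p(N)$; this is unbounded along that class, so (C) fails, and agreement with (L) would force $a_i=\ell+Nb\in\mathbb{Z}$, i.e.\ $b\in N^{-1}\mathbb{Z}$, a contradiction. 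Hence $(1)\Leftrightarrow(2)$.

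For $(2)\Leftrightarrow(3)$: since $\mathcal Z\subseteq\mathbb{Z}$, the requirement $\ell+Nb\in\mathcal Z$ already forces $\ell+Nb\in\mathbb{Z}$, i.e.\ $b\in N^{-1}\mathbb{Z}$, giving $(3)\Rightarrow(2)$. For $(2)\Rightarrow(3)$, let $b\in N^{-1}\mathbb{Z}$ be a zero of $f_\ell$ and put $k:=Nb\in\mathbb{Z}$. The units $(\lambda_i^{N})^{b}:=\exp_p\bigl(b\log_p(\lambda_i^{N})\bigr)$ and $\lambda_i^{k}$ both have $N$th power equal to $\lambda_i^{Nk}$, so $\zeta_i:=(\lambda_i^{N})^{b}/\lambda_i^{k}$ is an $N$th root of unity; therefore $\lambda_i^{\ell+Nb}=\lambda_i^{\ell}\lambda_i^{k}=\zeta_i^{-1}\lambda_i^{\ell}(\lambda_i^{N})^{b}$, and consequently $\sum_{i=1}^{3}\zeta_ic_{\lambda_i}\lambda_i^{\ell+Nb}=\sum_{i=1}^{3}c_{\lambda_i}\lambda_i^{\ell}(\lambda_i^{N})^{b}=f_\ell(b)=0$, which exhibits $\ell+Nb$ as a twisted integral zero. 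This closes the chain of equivalences.

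The difficulty here is not conceptual but technical, and lies in two points that make the above rigorous for \eqref{rec} rather than for the Tribonacci sequence specifically: (i) confirming that $f_\ell$ is genuinely analytic on all of $\mathbb{Z}_p$ with the stated local shape of $\nu_p(f_\ell)$, which needs care at $p=2$, where one must enlarge $N$ (take $n\ge 2$ in $N=N_{2^n}$) so that $\lambda_i^{N}\equiv 1\pmod 4$ and $\exp_p$ converges; and (ii) the bookkeeping in refining residues modulo $N$ to residues modulo $Q=p^{s}N$ while simultaneously keeping $a_i\in\mathbb{Z}$ and $\nu_p(a_i-i)\ge\nu_p(Q)$. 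Once these are settled, the argument of \cite[Theorem~6.1]{Bilu} applies with no further change.
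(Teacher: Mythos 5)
Your proposal is correct and follows exactly the route the paper takes: the paper gives no independent argument but simply asserts that the proof of \cite[Theorem 6.1]{Bilu} carries over to any third order recurrence \eqref{rec} satisfying \eqref{equal}, and your write-up is precisely that verification carried out in detail (analyticity of $f_\ell$ via Strassmann/Weierstrass preparation, the refinement $Q=p^{s}N$ for $(2)\Rightarrow(1)$, the density argument for $(1)\Rightarrow(2)$, and the root-of-unity computation for $(2)\Leftrightarrow(3)$). No gaps; if anything, your version is more explicit than the paper's.
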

Note that the equivalent statements hold for any $N=N_{p^n}$.
\par Using the equivalent conditions, we obtain two results that can be used to find primes $p$ for which the conjecture is true or false for sequences of the form \eqref{rec}. 
	 \begin{thm}\label{T1}
		Let $x_n=ax_{n-1}+bx_{n-2}+cx_{n-3}$ be a third order linear recurrence sequence with initial values $x_0,x_1,x_2$ satisfying equation \eqref{equal}. Suppose that the characteristic polynomial $P(x):=x^3-ax^2-bx-c$ of $(x_n)$ is irreducible over $\mathbb{Q}$ and $\lambda_1,\lambda_2,\lambda_3$ are the roots of $P(x)$ in some extension of $\mathbb{Q}_p$. Also, assume that $p$ divides neither  $c$ nor the discriminant of $P(x)$. Then the following holds:
		\begin{enumerate}
			\item If $p=2$, then if there exists an $\ell \in \{0,1,\dots,N_{p^2}-1\}$ such that $p^2| x_\ell$, $x_{\ell+N_{p^2}}-x_\ell\not\equiv 0\pmod{p^3}$ and  $$\ell-\frac{x_\ell}{p^2}\left(\frac{x_{\ell+N_{p^2}}-x_\ell}{p^2}\right)^{-1}N_{p^2}\not\equiv r\pmod{p}$$for all $r\in\mathcal{Z}$, then  Conjecture \ref{con} does not hold for $p$.
			\item If $p\neq 2$, then if there exists an $\ell \in \{0,1,\dots,N_{p}-1\}$ such that $p| x_\ell$, $x_{\ell+N_{p}}-x_\ell\not\equiv 0\pmod{p^2}$ and  $$\ell-\frac{x_\ell}{p}\left(\frac{x_{\ell+N_{p}}-x_\ell}{p}\right)^{-1}N_{p}\not\equiv r\pmod{p}$$for all $r\in\mathcal{Z}$, then  Conjecture \ref{con} does not hold for $p$.
		\end{enumerate}
	\end{thm}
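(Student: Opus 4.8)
The strategy is to use Theorem \ref{cod}: it suffices to show that statement (3) there fails, i.e.\ to produce an index $\ell$ and a zero $b\in\mathbb{Z}_p$ of the function $f_\ell$ of \eqref{f} with $\ell+Nb\notin\mathcal{Z}$. Write $n=1$ if $p$ is odd and $n=2$ if $p=2$, and take $N=N_{p^n}$ (the equivalence in Theorem \ref{cod} is valid for this $N$). Since $\mathcal{Z}\subseteq\mathbb{Z}$, it is in fact enough to find such a $b$ with $\ell+Nb\not\equiv r\pmod p$ for every $r\in\mathcal{Z}$, because any element of $\mathcal{Z}$ equal to $\ell+Nb$ would also be congruent to it modulo $p$. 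The index $\ell$ is the one furnished by the hypothesis, and $b$ will be obtained from a Hensel-type argument applied to $f_\ell$ near $0$.

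First I would expand $f_\ell(x)=\sum_{k\ge0}a_kx^k$ with $a_k=\sum_{i=1}^{3}c_{\lambda_i}\lambda_i^{\ell}\,(\log_p(\lambda_i^{N}))^{k}/k!$ and pin down the $p$-adic sizes of the coefficients. Because $N=N_{p^n}$ is the period of $(x_n)$ modulo $p^n$, we have $\lambda_i^{N}\equiv1\pmod{p^n}$ in $\mathcal{O}$, hence $\nu_p(\log_p(\lambda_i^{N}))\ge n$; choosing $n\ge2$ when $p=2$ is precisely what is needed for $\exp_p$ to converge on $\mathbb{Z}_p$ and to make $f_\ell$ land in $\mathbb{Z}_p$. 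From $kn-\nu_p(k!)\ge n+1$ for $k\ge2$ one then gets $\nu_p(a_k)\ge n+1$ for all $k\ge2$, while $a_0=x_\ell$; moreover, since $\log_p(\lambda_i^{N})$ and $\lambda_i^{N}-1$ differ by a quantity of valuation at least $2n-\nu_p(2)\ge n+1$, we obtain $a_1\equiv\sum_{i=1}^{3}c_{\lambda_i}(\lambda_i^{\ell+N}-\lambda_i^{\ell})=x_{\ell+N}-x_\ell\pmod{p^{n+1}}$ by \eqref{equal}. Now the hypothesis says $\nu_p(a_0)=\nu_p(x_\ell)\ge n$ and $\nu_p(x_{\ell+N}-x_\ell)\le n$; as $p^n\mid x_{\ell+N}-x_\ell$ automatically, this forces $\nu_p(x_{\ell+N}-x_\ell)=n$, and hence $\nu_p(a_1)=n$ exactly.

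Next I would normalise: set $g(x):=p^{-n}f_\ell(x)=\sum_kg_kx^k$, so that $g\colon\mathbb{Z}_p\to\mathbb{Z}_p$ with $\nu_p(g_0)\ge0$, $g_1\in\mathbb{Z}_p^{\times}$, and $\nu_p(g_k)\ge1$ for $k\ge2$; in particular $\nu_p(g'(x))=0$ for every $x\in\mathbb{Z}_p$. Taking $x_1:=-g_0/g_1\in\mathbb{Z}_p$, the linear part is cancelled, so $g(x_1)=\sum_{k\ge2}g_kx_1^{k}\in p\mathbb{Z}_p$ and therefore $\nu_p(g(x_1))\ge1>0=2\nu_p(g'(x_1))$. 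Hensel's lemma then yields a zero $b\in\mathbb{Z}_p$ of $g$, hence of $f_\ell$, with $b\equiv x_1\pmod p$. Since $g_0=x_\ell/p^n$ and $g_1\equiv(x_{\ell+N}-x_\ell)/p^n\pmod p$ (both numbers lying in $\mathbb{Z}_p$, the second a unit), we get $b\equiv-\dfrac{x_\ell}{p^n}\left(\dfrac{x_{\ell+N}-x_\ell}{p^n}\right)^{-1}\pmod p$, so $\ell+Nb\equiv\ell-\dfrac{x_\ell}{p^n}\left(\dfrac{x_{\ell+N}-x_\ell}{p^n}\right)^{-1}N\pmod p$. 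By hypothesis the right-hand side is not congruent to any $r\in\mathcal{Z}$ modulo $p$, so $\ell+Nb\notin\mathcal{Z}$; by Theorem \ref{cod}, statement (3) fails and Conjecture \ref{con} does not hold for $p$.

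I expect the heart of the argument, and the main place where care is needed, to be the coefficient analysis of $f_\ell$ — especially the congruence $a_1\equiv x_{\ell+N}-x_\ell\pmod{p^{n+1}}$, which is what turns the analytically defined ``slope of $f_\ell$ at $0$'' into an explicit difference of sequence terms — together with two sources of friction: the case $p=2$, where convergence of $\exp_p$ forces one to work modulo $p^2$ rather than $p$ (hence the appearance of $N_{p^2}$), and the fact that $\nu_p(x_\ell)$ may be as small as $n$, so that a preliminary Newton step is required before the strong form of Hensel's lemma can be invoked.
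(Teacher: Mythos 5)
Your proposal is correct and follows essentially the same route as the paper: divide $f_\ell$ by $p^n$ (with $n=2$ for $p=2$ and $n=1$ otherwise), show the constant term is $x_\ell/p^n$, the linear coefficient is congruent to $(x_{\ell+N}-x_\ell)/p^n$ and is a unit, and the higher coefficients lie in $p\mathbb{Z}_p$, then apply Hensel's lemma at $-\beta_0\beta_1^{-1}$ to produce a zero $b$ of $f_\ell$ with $\ell+Nb\notin\mathcal{Z}$, contradicting statement (3) of Theorem \ref{cod}. The only cosmetic differences are that you treat both parities of $p$ uniformly and skip the (unneeded for this direction) uniqueness of the zero via Strassman's theorem.
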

 \begin{thm}\label{T2}
	Let $x_n=ax_{n-1}+bx_{n-2}+cx_{n-3}$ be a third order linear recurrence sequence with initial values $x_0,x_1,x_2$ satisfying equation \eqref{equal}. Suppose that the characteristic polynomial $P(x):=x^3-ax^2-bx-c$ of $(x_n)$ is irreducible over $\mathbb{Q}$ and $\lambda_1,\lambda_2,\lambda_3$ are the roots of $P(x)$ in some extension of $\mathbb{Q}_p$. Also, assume that $p$ divides neither $c$ nor the discriminant of $P(x)$. Then the following holds:
	\begin{enumerate}
		\item If $p=2$, then if for all $\ell \in \{0,1,\dots,N_{p^2}-1\}$, $p^2| x_\ell$ implies $x_{\ell+N_{p^2}}-x_\ell\not\equiv 0\pmod{p^3}$ then, Conjecture \ref{con}  holds for $p$ if $\ell\equiv r\pmod{N_{p^2}}$ for some $r\in\mathcal{Z}$.
		\item If $p\neq 2$, then if for all $\ell \in \{0,1,\dots,N_{p}-1\}$, $p| x_\ell$ implies  $x_{\ell+N_p}-x_\ell\not\equiv 0\pmod{p^2}$ then, Conjecture \ref{con}  holds for $p$ if $\ell\equiv r\pmod{N_{p}}$ for some $r\in\mathcal{Z}$.
	\end{enumerate}
\end{thm}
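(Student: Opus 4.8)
The plan is to derive Theorem~\ref{T2} from Theorem~\ref{cod}: it suffices to show that, for every $\ell\in\{0,1,\dots,N-1\}$, the zeros of $f_\ell$ lying in $\mathbb Z_p$ in fact lie in $N^{-1}\mathbb Z$ (equivalently, every such zero $b$ satisfies $\ell+Nb\in\mathcal Z$). Here I set $n=2$ if $p=2$ and $n=1$ if $p\neq 2$, and $N=N_{p^n}$; this is the quantity occurring in parts~(1) and~(2). By the definition of $N_{p^n}$ we have $\lambda_i^N\equiv 1\pmod{p^n}$, so $\nu_p(\lambda_i^N-1)\ge n$, and since $\log_p$ is an isometry on $1+p^n\mathcal O$ for exactly these $n$, also $\nu_p\bigl(\log_p(\lambda_i^N)\bigr)\ge n$ for $i=1,2,3$; it is to secure this with $n=2$ that part~(1) is phrased with $N_{p^2}$.

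The heart of the proof is a Newton-polygon analysis of $f_\ell(x)=\sum_{m\ge0}a_mx^m$, $a_m=\tfrac1{m!}\sum_{i=1}^3 c_{\lambda_i}\lambda_i^{\ell}\bigl(\log_p(\lambda_i^N)\bigr)^m$. First, $a_0=x_\ell$; next, $\nu_p(\log_p(\lambda_i^N))\ge n$ together with Legendre's formula for $\nu_p(m!)$ gives $\nu_p(a_m)\ge n+1$ for all $m\ge 2$; and from $\log_p(u)-(u-1)\in p^{\,2n-\nu_p(2)}\mathcal O=p^{\,n+1}\mathcal O$ for $u\in 1+p^n\mathcal O$ one deduces $\nu_p\bigl(a_1-(x_{\ell+N}-x_\ell)\bigr)\ge n+1$. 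If $p^n\nmid x_\ell$, then $\nu_p(a_0)\le n-1<n\le\nu_p(a_m)$ for all $m\ge1$, so the Newton polygon of $f_\ell$ has only positive slopes and $f_\ell$ has no zero in $\mathbb Z_p$; there is nothing to prove for these $\ell$. If $p^n\mid x_\ell$, then the hypothesis $x_{\ell+N}-x_\ell\not\equiv0\pmod{p^{n+1}}$ forces $\nu_p(x_{\ell+N}-x_\ell)=n$, whence $\nu_p(a_1)=n$ by the estimate above; combined with $\nu_p(a_0)\ge n$ and $\nu_p(a_m)\ge n+1$ for $m\ge2$, the Newton polygon has a single segment of non-positive slope, of horizontal length~$1$. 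Via the Weierstrass factorisation of $f_\ell$ (valid since $\nu_p(a_m)\to\infty$, and $f_\ell\not\equiv0$ by non-degeneracy), this means $f_\ell$ has exactly one zero $b_\ell$ in $\mathbb Z_p$, with $\nu_p(b_\ell)=\nu_p(x_\ell)-n\ge0$.

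It remains to show, for each $\ell$ in a period with $p^n\mid x_\ell$, that $b_\ell\in N^{-1}\mathbb Z$; this is where the hypothesis that the residue class of $\ell$ modulo $N$ meets $\mathcal Z$ is used. The mechanism is the explicit description of $f_\ell$ on $N^{-1}\mathbb Z$: for $j\in\mathbb Z$ with $j/N\in\mathbb Z_p$, setting $\zeta_i:=\lambda_i^{-1}\exp_p\!\bigl(\tfrac1N\log_p(\lambda_i^N)\bigr)$ one checks that $\zeta_i^N=1$ and
\[
f_\ell(j/N)=\sum_{i=1}^3 c_{\lambda_i}\lambda_i^{\ell}(\lambda_i\zeta_i)^{j}=\sum_{i=1}^3 \zeta_i^{\,j}\,c_{\lambda_i}\,\lambda_i^{\,\ell+j},
\]
so $f_\ell(j/N)=0$ precisely when $\ell+j\in\mathcal Z$ via the twists $(\zeta_i^{\,j})_i$. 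Given $r\in\mathcal Z$ with $r\equiv\ell\pmod N$, feeding the relation witnessing $r\in\mathcal Z$ into this identification — following the lemmas of \cite{Bilu} linking $\mathcal Z$ with the functions $f_\ell$ — produces an integer $j$ with $j/N\in\mathbb Z_p$ and $f_\ell(j/N)=0$, i.e.\ a zero of $f_\ell$ in $N^{-1}\mathbb Z$. By the uniqueness just established, $b_\ell=j/N\in N^{-1}\mathbb Z$, and Theorem~\ref{cod} yields Conjecture~\ref{con} for $p$. Part~(1) is the same argument run with $n=2$ and $N=N_{p^2}$.

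The step I expect to be the main obstacle is this last one, namely converting the given twisted integral zero $r\equiv\ell\pmod N$ into an honest zero of $f_\ell$ in $N^{-1}\mathbb Z$: the roots of unity witnessing $r\in\mathcal Z$ are unconstrained, whereas the zeros of $f_\ell$ in $N^{-1}\mathbb Z$ correspond only to twists of the special shape $(\zeta_i^{\,j})_i$ with $\zeta_i^N=1$, and bridging this gap is exactly what the correspondence between $\mathcal Z$ and the $f_\ell$ from \cite{Bilu} is needed for. A lesser technical nuisance in part~(1) is that $N_{p^2}$ can be even, so $\exp_p$ has to be handled with some care on $N^{-1}\mathbb Z\cap\mathbb Z_p$, though the valuation bounds above still apply.
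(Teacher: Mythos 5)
Your overall architecture is the right one, and it is the same one the paper intends: the paper gives no independent argument for Theorem~\ref{T2}, deferring entirely to \cite[Theorem 8.2]{Bilu}, and that argument is exactly your reduction via Theorem~\ref{cod} plus the count of zeros of $f_\ell$ (your Newton-polygon computation is equivalent to the Strassman--Hensel argument the paper runs explicitly in the proof of Theorem~\ref{T1} and Lemma~\ref{lem-new}, and your valuation estimates for $a_0$, $a_1$, and $a_m$, $m\ge 2$, match the paper's computation of $\beta_0,\beta_1,\beta_k$). Up to and including the conclusion that $f_\ell$ has no zero in $\mathbb{Z}_p$ when $p^n\nmid x_\ell$ and exactly one zero when $p^n\mid x_\ell$ and $x_{\ell+N}-x_\ell\not\equiv 0\pmod{p^{n+1}}$, your proof is correct.

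The last step, which you flag as the main obstacle, is where the gap sits --- but the difficulty is not where you locate it. Since $\ell\in\{0,\dots,N-1\}$ and $r\in\mathcal Z\subset\mathbb Z$ with $r\equiv\ell\pmod N$, the candidate point $(r-\ell)/N$ is an ordinary integer $m$, and the interpolation property $f_\ell(m)=x_{\ell+mN}$ gives $f_\ell\bigl((r-\ell)/N\bigr)=x_r$ with no roots of unity entering at all; your machinery of the twists $\zeta_i^{\,j}$ is only needed for points of $N^{-1}\mathbb Z$ that are not integers, which is not the situation here. Consequently the step succeeds precisely when $x_r=0$, i.e.\ when $r$ is an honest zero of the sequence, and genuinely fails when $r$ is a twisted integral zero with nontrivial twists: in that case $f_\ell\bigl((r-\ell)/N\bigr)=x_r\neq 0$, the unique zero of $f_\ell$ is some other point of $\mathbb Z_p$, and no appeal to the correspondence between $\mathcal Z$ and the $f_\ell$ will place it in $N^{-1}\mathbb Z$. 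So you should either (a) observe that in every application made in this paper (Theorem~\ref{T} and Examples~\ref{ex2}, \ref{ex3}) one has $\mathcal Z=\{\text{zeros of }(x_n)\}$, whence $x_r=0$ and the unique zero of $f_\ell$ is the integer $(r-\ell)/N$ by uniqueness --- which closes the proof immediately and far more simply than you anticipate --- or (b) note that for the statement as literally written, with $\mathcal Z$ the full set of twisted integral zeros, the hypothesis ``$\ell\equiv r\pmod N$ for some $r\in\mathcal Z$'' is not by itself enough to identify the zero of $f_\ell$, and an additional verification that $f_\ell\bigl((r-\ell)/N\bigr)=0$ (equivalently $x_r=0$) is required. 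A correct writeup must make this dichotomy explicit rather than gesture at unspecified lemmas of \cite{Bilu}.
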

As evident from the previous theorems, it is beneficial to have a way to find out the Twisted Integral Zeros of sequences. The result below gives certain conditions which ensure that for certain linear recurrence sequences, zeros are the only possible Twisted Integral Zeros. Using the Skolem tool \cite{skolem}, zeros of linear recurrence sequences can be easily calculated. 
\begin{thm}\label{T}
	Let $x_n=ax_{n-1}+bx_{n-2}+cx_{n-3}$ be a third order linear recurrence sequence with initial values $x_0,x_1,x_2$ satisfying equation \eqref{equal}. Suppose that the characteristic polynomial $P(x):=x^3-ax^2-bx-c$ of $(x_n)$ is irreducible over $\mathbb{Q}$ and $\lambda_1,\lambda_2,\lambda_3$ are the roots of $P(x)$ in $\mathbb{C}$. Then, the set of Twisted Integral Zeros of $(x_n)$ is equal to the set of zeros of $(x_n)$ if the following three conditions hold:
	\begin{enumerate}
		\item $\mathbb{Q}(\lambda_i)$ is not Galois for some $1\leq i\leq3$.
		\item For the $i$ satisfying condition (1), $\log{\bigl\lvert\frac{c(\lambda_i)}{c(\lambda_j)}\bigr\rvert}/\log{\bigl\lvert\frac{\lambda_j}{\lambda_i}\bigr\rvert}\notin\mathbb{Z}$ for some $1\leq j \leq 3,j\neq i$. 
		\item $\mathbb{Q}(\lambda_1,\lambda_2,\lambda_3)$ does not contain primitive cubic roots of unity.
	\end{enumerate} 
\end{thm}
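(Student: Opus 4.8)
The goal is the nontrivial inclusion: every Twisted Integral Zero of $(x_n)$ is a zero, the reverse inclusion being trivial (take $\xi_1=\xi_2=\xi_3=1$). So I would fix an $n\in\mathbb{Z}$ with $\sum_{i=1}^{3}\xi_i c_{\lambda_i}\lambda_i^{n}=0$ for roots of unity $\xi_i$, write $A_i:=c_{\lambda_i}\lambda_i^{n}$ and $K:=\mathbb{Q}(\lambda_1,\lambda_2,\lambda_3)$, and aim to prove $A_1+A_2+A_3=0$. I would first record the standing facts used throughout: since $q,P'\in\mathbb{Q}[x]$, $\mathrm{Gal}(K/\mathbb{Q})$ permutes the $A_i$ exactly as it permutes the $\lambda_i$; the sequence being nontrivial forces $q(\lambda_i)\neq 0$, hence $A_i\neq 0$ and also $\lambda_i\neq 0$; condition (1) is equivalent to $\mathrm{Gal}(K/\mathbb{Q})\cong S_3$ (a cubic field is Galois iff the discriminant is a square); and, after relabelling the roots, condition (2) amounts to $|A_1|\neq|A_2|$. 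I would then split into the cases where $P$ has a complex conjugate pair of roots, and where all roots of $P$ are real, applying complex conjugation to the relation $\sum\xi_iA_i=0$ in each.

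Suppose $\lambda_1\in\mathbb{R}$ and $\lambda_3=\overline{\lambda_2}$; here $\mathrm{Gal}(K/\mathbb{Q})\cong S_3$ automatically, since the discriminant is negative. Conjugating $\sum\xi_iA_i=0$ and using $\overline{A_1}=A_1$, $\overline{A_2}=A_3$ gives a second relation; eliminating $A_1$ between the two, a short computation shows that the two coefficients of the resulting relation in $A_2,A_3$ vanish simultaneously exactly when $\xi_1^{2}=\xi_2\xi_3$, while if they do not vanish then $A_2/A_3=-\xi_3/\xi_2$, which substituted back forces $A_1=0$—impossible. So $\xi_1^{2}=\xi_2\xi_3$, and with $\zeta:=\xi_2/\xi_1$ the relation becomes $A_1+\zeta A_2+\zeta^{-1}A_3=0$. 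Next I would pick a lift $\tilde\rho\in\mathrm{Gal}(K(\zeta)/\mathbb{Q})$ of a $3$-cycle on $\{\lambda_1,\lambda_2,\lambda_3\}$ with $\tilde\rho(\zeta)=\zeta$—possible because a $3$-cycle lies in $A_3$ and so fixes the quadratic subfield of $K$, hence fixes $K\cap\mathbb{Q}(\zeta)$—and apply $\tilde\rho$ and $\tilde\rho^{2}$ to get a circulant $3\times 3$ system satisfied by $(A_1,A_2,A_3)\neq 0$; vanishing of the circulant determinant, after factoring, forces $\zeta$ to be a cube root of unity. If $\zeta=1$ we are done. If $\zeta$ is primitive, condition (3) gives $\zeta\notin K$, so there is $\sigma\in\mathrm{Gal}(K(\zeta)/K)$ with $\sigma(\zeta)=\zeta^{-1}$; subtracting $\sigma$ applied to the relation from the relation yields $A_2=A_3$, whence the relation itself gives $A_1=A_2=A_3$, contradicting $|A_1|\neq|A_2|$.

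Now suppose $\lambda_1,\lambda_2,\lambda_3\in\mathbb{R}$, so $A_i\in\mathbb{R}$; here condition (1), i.e.\ $\mathrm{Gal}(K/\mathbb{Q})\cong S_3$, is used in an essential way. Conjugating $\sum\xi_iA_i=0$ and taking real and imaginary parts gives $\sum\mathrm{Re}(\xi_i)A_i=0$ and $\sum\mathrm{Im}(\xi_i)A_i=0$. If $(\mathrm{Re}\,\xi_i)_i$ and $(\mathrm{Im}\,\xi_i)_i$ are proportional, then each $\xi_i$ is $\pm\omega_0$ for a single root of unity $\omega_0$, so $\sum\varepsilon_iA_i=0$ with $\varepsilon_i\in\{\pm1\}$; if all $\varepsilon_i$ are equal we are done, and otherwise applying a transposition from $S_3$ to this relation and subtracting forces some $A_i=0$, a contradiction. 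If instead the two vectors are independent, then $(A_1,A_2,A_3)$ is proportional to their cross product, whose entries lie in $\mathbb{Q}(\zeta_m)$ for a suitable $m$; hence every ratio $A_i/A_j$ lies in $K\cap\mathbb{Q}(\zeta_m)$, which by condition (1) is a real field of degree $\le 2$ over $\mathbb{Q}$, hence is fixed by $A_3$. Since all three transpositions of $S_3$ restrict identically to that field, applying them to $A_i/A_j$ shows $A_2/A_1=A_3/A_2=A_1/A_3$, so this common value is a cube root of unity in a real field, hence $1$; thus $A_1=A_2=A_3$, again contradicting $|A_1|\neq|A_2|$.

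I expect the main obstacle to be organisational rather than a single deep input: producing the lift $\tilde\rho$ fixing $\zeta$ and correctly extracting the circulant structure in the complex case, and seeing the cross-product rigidity together with the identical action of all transpositions on the quadratic subfield in the real case. Conceptually the proof is uniform: conditions (2) and (3) are present precisely to rule out the two ways a genuine nonzero triple $(A_i)$ can satisfy a twisted relation—the Eisenstein configuration $A_1+\zeta_3A_2+\zeta_3^{-1}A_3=0$, excluded once $\zeta_3\notin K$ by condition (3), and the degenerate ``$\sum\varepsilon_iA_i=0$'' and cross-product balances, excluded once the $A_i$ are forced equal against condition (2)—while condition (1) furnishes the transpositions that close every case.
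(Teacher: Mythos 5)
Your argument is correct, but it takes a genuinely different route from the paper. The paper's proof is a short reduction: it sets $\alpha_i=c_{\lambda_i}\lambda_i^n$, uses condition (1) to get $\mathrm{Gal}(\mathbb{Q}(\lambda_1,\lambda_2,\lambda_3)/\mathbb{Q})\cong S_3$ and condition (2) to rule out $\alpha_i\in\mathbb{Q}$ (if $\alpha_i$ were rational, the transposition $(i\,j)$ would give $\alpha_i=\alpha_j$ and force $n$ to equal the forbidden log-quotient), concluding that $\alpha_i$ is a cubic irrationality generating a non-Galois field whose conjugates are exactly the $\alpha_j$; it then invokes Lemma~\ref{l2} (Lemma 2.5 of Bilu et al.) as a black box to get $\xi_1=\xi_2=\xi_3$ and hence $\sum_i\alpha_i=0$. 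What you have done is essentially reprove the content of Lemma~\ref{l2} from scratch in this setting: the case split on the signature of $P$, the complex-conjugation elimination, the circulant determinant forcing $\zeta^3=1$, and the descent via $\sigma(\zeta)=\zeta^{-1}$ when $\zeta\notin K$ (which is exactly where condition (3) enters in both treatments) are precisely the ingredients that lemma packages. Your use of condition (2) differs only cosmetically: you invoke it at the end of each branch to contradict $A_1=A_2=A_3$, whereas the paper uses it at the outset to show $\alpha_i\notin\mathbb{Q}$; these are equivalent, since rationality of $\alpha_i$ is exactly what forces $\alpha_i=\alpha_j$ under a transposition. The trade-off is clear: the paper's proof is a few lines but outsources the real work to a quoted lemma, while yours is self-contained and makes visible where each of the three hypotheses is actually needed, at the cost of a longer case analysis (and of some care in the details you flag as organisational, e.g.\ in the real-roots branch with $\sum_i\varepsilon_iA_i=0$ you must apply a transposition that moves the index carrying the exceptional sign, since the one fixing it returns the same relation and gives no information).
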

We give some examples as applications of Theorem \ref{T}.
\begin{ex}\label{ex1}
	The Tribonacci sequence is defined as 
	$$x_n=x_{n-1}+x_{n-2}+x_{n-3}, x_0=0,x_1=x_2=1.$$ 
	In \cite{Bilu}, Bilu et al. proved that the set of Twisted Integral Zeros of $(x_n)$ is equal to $\{0,-1,-4,-17\}$ which is also the set of zeros of the sequence.
\end{ex}
\begin{ex}\label{ex2}
	The Tripell sequence is defined as $$x_n=2x_{n-1}+x_{n-2}+x_{n-3}, x_0=0,x_1=1,x_2=2.$$
	Let $\lambda_1$ be the real root of the characteristic polynomial $P(x)=x^3-2x^2-x-1$. We have, $\mathbb{Q}(\lambda_1)$ is not Galois. Here $q(x)=x$. The condition $(2)$ of Theorem \ref{T} reduces to 
	$$\log{\lvert\frac{q(\lambda_i)P'(\lambda_j)}{q(\lambda_j)P'(\lambda_i)}\rvert}/\log{\lvert\frac{\lambda_j}{\lambda_i}\rvert}\approx 0.668 \notin\mathbb{Z}.$$
	Now, to show that $\mathbb{Q}(\lambda_1,\lambda_2,\lambda_3)$ does not contain primitive cubic roots of unity, observe that  $P(x)=x^3-2x^2-x-1\equiv(x+25)(x+27)(x+28)\pmod{41}$. Therefore, by Hensel's lemma, $P(x)$ has three distinct roots, say, $\lambda_1',\lambda_2',\lambda_3'$ in $\mathbb{Q}_{41}$ i.e, $\mathbb{Q}(\lambda_1',\lambda_2',\lambda_3')\subset\mathbb{Q}_{41}$ but $x^3-1$ does not split modulo $41$. Hence, $\mathbb{Q}_{41}$ does not contain primitive cubic roots of unity. Therefore, $\mathbb{Q}(\lambda_1',\lambda_2',\lambda_3')$ does not contain primitive cubic roots of unity. Since $\mathbb{Q}(\lambda_1,\lambda_2,\lambda_3)$ is isomorphic to $\mathbb{Q}(\lambda_1',\lambda_2',\lambda_3')$, $\mathbb{Q}(\lambda_1,\lambda_2,\lambda_3)$ does not contain primitive cubic roots of unity. Also, the Tripell sequence satisfies the equation \eqref{equal}. Hence, the set of Twisted Integral Zeros of $(x_n)$ is equal to the set of zeros of $(x_n)=\{-1,0\}$. The set of zeros is calculated by using Skolem tool \cite{skolem}.
\end{ex}
\begin{ex}\label{ex3}
	A slightly modified Tripell sequence is defined as $$x_n=2x_{n-1}+2x_{n-2}+x_{n-3}, x_0=0,x_1=1,x_2=2.$$
	Let $\lambda_1$ be the real root of the characteristic polynomial $P(x)=x^3-2x^2-2x-1$. We have, $\mathbb{Q}(\lambda_1)$ is not Galois. Here $q(x)=x$. The condition $(2)$ of Theorem \ref{T} reduces to 
	$$\log{\lvert\frac{q(\lambda_i)P'(\lambda_j)}{q(\lambda_j)P'(\lambda_i)}\rvert}/\log{\lvert\frac{\lambda_j}{\lambda_i}\rvert}\approx 0.861 \notin\mathbb{Z}.$$ Since $P(x)\equiv(x+19)(x+30)(x+31)\pmod{41}$, following the same argument as given in Example \ref{ex2}, we find that $\mathbb{Q}(\lambda_1,\lambda_2,\lambda_3)$ does not contain primitive cubic roots of unity. Hence, by Theorem \ref{T}, the set of Twisted Integral Zeros of $(x_n)$ is equal to the set of zeros of $(x_n)=\{-1,0\}$. We again calculate the zeros of $(x_n)$ by using Skolem tool \cite{skolem}.
\end{ex}
Using the above results, we run a program for primes upto $1000$ for the recurrence sequences defined in Examples \ref{ex2} and \ref{ex3} respectively, and obtain the following.
\begin{thm}\label{mtri} (1) For the Tripell sequence as defined in Example \ref{ex2}, Conjecture \ref{con} fails for $$p\in [5,1000]\backslash\{5,19,29,41,103,137,151,191,283,397,487,491,571,709,773,787,$$ $$877,883,971,983\}$$ and Conjecture \ref{con} holds for $$p\in\{103,137,191,397,487,491,709,773,787,883,971,983\}$$ in the form 
		\[\nu_p(x_n)= 
	\begin{cases}
	\nu_p(n+c)+1,& \text{if } n\equiv -c \pmod{Q_p},-c\in\{0,-1\};\\
		0,&     \text{ otherwise}.
	\end{cases}
	\] where $Q_p$ is given below.\\
	\begin{tabular}{|c|c|c|c|c|c|c|c|c|c|c|c| r |}
		\hline			
		$  p$ & $103$ & $137$ & $191$ & $397$ & $487$ & $491$ & $709$ & $773$ & $787$ & $883$ & $971$ & $983$ \\
		\hline
		$Q_p$ & $102$ & $136$ & $95$  & $198$ & $486$ & $245$ & $708$ & $772$ & $262$ & $882$ & $970$ & $491$ \\
		
		\hline  
	\end{tabular}\\\\
(2) For the modified Tripell sequence as defined in Example \ref{ex3}, Conjecture \ref{con} fails for $$p\in [5,1000]\backslash\{5,7,23,41,83,131,193,227,293,397,401,659,701,787,983\}$$ and	Conjecture \ref{con} holds for $$p\in \{5,23,41,131,193,227,293,401,659,701,787,983\}$$ in the form 
\[\nu_p(x_n)= 
\begin{cases}
	\nu_p(n+c)+1,& \text{if } n\equiv -c \pmod{Q_p},-c\in\{0,-1\};\\
	0,&     \text{ otherwise}.
\end{cases}
\] where $Q_p$ is given below.\\
	\begin{tabular}{|c|c|c|c|c|c|c|c|c|c|c|c| r |}
	\hline			
	$  p$ & $5$ & $23$ & $41$ & $131$ & $193$ & $227$ & $293$ & $401$ & $659$ & $701$ & $787$ & $983$ \\
	\hline
	$Q_p$ & $8$ & $22$ & $40$  & $130$ & $192$ & $113$ & $292$ & $400$ & $658$ & $350$ & $786$ & $982$ \\
	
	\hline  
\end{tabular}\\
	 \end{thm}
Next, we calculate $2$-adic valuation of the modified Tripell sequence as defined in Example \ref{ex3} explicitly.
\begin{thm}\label{val}
	Let $(x_n)$ be the modified Tripell sequence. For $n\geq1$, we have
	\[\nu_2(x_n)= 
	\begin{cases}
		0,& \text{if } n\equiv 1,4 \pmod{6},\\
		1,&       \text{if } n\equiv 2,3 \pmod{6},       \\
		2+\nu_2(n),& \text{if } n\equiv 0 \pmod{6}, \\
		3+\nu_2(n+1),& \text{if } n\equiv 5 \pmod{6}.
	\end{cases}
	\]
\end{thm}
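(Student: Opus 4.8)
The plan is to split the four residue classes modulo $6$ into two ``flat'' ones, $n\equiv 1,4$ and $n\equiv 2,3\pmod 6$, on which $\nu_2(x_n)$ is constant, and two ``growing'' ones, $n\equiv 0$ and $n\equiv 5\pmod 6$, on which a lifting-the-exponent argument is needed. For the flat classes I would use pure periodicity: since $2\nmid c=1$, the companion matrix of the recurrence is invertible modulo every power of $2$, so $(x_n)_{n\in\mathbb Z}$ is purely periodic modulo $2$ and modulo $4$. Listing the first few terms shows the period is $3$ modulo $2$, with $x_n$ odd exactly when $n\equiv 1\pmod 3$, and $6$ modulo $4$, with $(x_0,x_1,\dots,x_5)\equiv(0,1,2,2,1,0)\pmod 4$. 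Reading these off gives $\nu_2(x_n)=0$ for $n\equiv 1,4\pmod 6$ and $\nu_2(x_n)=1$ for $n\equiv 2,3\pmod 6$.

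For the growing classes the engine is the polynomial identity
\begin{equation*}
x^6-1=(x^3+2x^2+6x+1)\,P(x)+8\,x\,(2x^2+2x+1),
\end{equation*}
which I would obtain simply by dividing $x^6-1$ by $P(x)=x^3-2x^2-2x-1$. Evaluating at a root $\lambda_i\in\mathcal O$ yields $\lambda_i^6-1=8\,\epsilon_i$ where $\epsilon_i:=\lambda_i(2\lambda_i^2+2\lambda_i+1)\in\mathcal O$; in particular $\lambda_i^6\equiv 1\pmod{8\mathcal O}$ for $i=1,2,3$. This genuine lifting step cannot be replaced by a finite computation modulo a high power of $2$: the period of $(x_n)$ is still $6$ modulo $8$ but is strictly larger modulo $16$ (already $x_6\equiv 8\pmod{16}$).

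Now fix an integer $m\ge 1$. Using \eqref{term} and expanding $\lambda_i^{6m}=(1+8\epsilon_i)^m$ by the (finite) binomial theorem,
\begin{equation*}
x_{6m}=\sum_{i=1}^{3}c_{\lambda_i}(1+8\epsilon_i)^m=\sum_{j\ge 0}8^{\,j}\binom{m}{j}E_j,\qquad E_j:=\sum_{i=1}^{3}c_{\lambda_i}\epsilon_i^{\,j},
\end{equation*}
and each $E_j$ lies in $\mathbb Z_2$, being an element of $\mathcal O$ fixed by $\operatorname{Gal}(\mathbb K/\mathbb Q_2)$ (which permutes the roots, and with them the $\epsilon_i$ and the $c_{\lambda_i}$). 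From $\epsilon_i=\lambda_i(2\lambda_i^2+2\lambda_i+1)$ and \eqref{term} one computes $E_0=\sum c_{\lambda_i}=x_0=0$ and $E_1=\sum c_{\lambda_i}(2\lambda_i^3+2\lambda_i^2+\lambda_i)=2x_3+2x_2+x_1=17$, which is odd. Since $j\binom{m}{j}=m\binom{m-1}{j-1}$, we get $\nu_2\!\left(8^{\,j}\binom{m}{j}E_j\right)\ge 3j-\nu_2(j)+\nu_2(m)\ge 5+\nu_2(m)$ for every $j\ge 2$, while the $j=1$ term $8mE_1$ has valuation $3+\nu_2(m)$; hence the $j=1$ term dominates and $\nu_2(x_{6m})=3+\nu_2(m)=2+\nu_2(6m)$, the asserted value for $n\equiv 0\pmod 6$. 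The class $n\equiv 5\pmod 6$ runs identically via $\lambda_i^{6m-1}=\lambda_i^{-1}(1+8\epsilon_i)^m$: one writes $x_{6m-1}=\sum_{j\ge 0}8^{\,j}\binom{m}{j}\widetilde E_j$ with $\widetilde E_j:=\sum c_{\lambda_i}\lambda_i^{-1}\epsilon_i^{\,j}\in\mathbb Z_2$, and here $\lambda_i^{-1}\epsilon_i=2\lambda_i^2+2\lambda_i+1$, so $\widetilde E_0=\sum c_{\lambda_i}\lambda_i^{-1}=0$ (equivalently $x_{-1}=0$; also $\widetilde E_0=q(0)=x_2-ax_1-bx_0=0$) and $\widetilde E_1=2x_2+2x_1+x_0=6$, with $\nu_2(\widetilde E_1)=1$. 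The same domination estimate gives $\nu_2(x_{6m-1})=\nu_2(8m\widetilde E_1)=4+\nu_2(m)=3+\nu_2(6m)=3+\nu_2\!\big((6m-1)+1\big)$. Letting $m$ range over $\mathbb Z_{\ge 1}$ exhausts every $n\ge 1$ in the two growing classes, completing the proof.

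I expect the main obstacle to be isolating and verifying the congruence ``$x^6\equiv 1$ modulo $(8,P(x))$'' — that is, the fact that all three roots are $6$th roots of unity modulo $8$ though not modulo $16$ — since this is precisely what makes the exponent lift cleanly; once it is in hand, the rest is routine $2$-adic bookkeeping via $j\binom{m}{j}=m\binom{m-1}{j-1}$ together with the values of the recurrence at $n=-1,0,1,2,3$.
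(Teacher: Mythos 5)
Your proof is correct, and it reaches the result by a genuinely different (and more elementary) route than the paper. The paper works inside its general $2$-adic analytic framework: for $\ell\in\{0,-1\}$ it takes the interpolating function $f_\ell(x)=\sum_i c_{\lambda_i}\lambda_i^{\ell}\exp_2(x\log_2\lambda_i^{6})$, divides by $4$, and computes the Taylor coefficients $\beta_k$ -- showing $\beta_0=0$, pinning down $\nu_2(\beta_1)$ exactly ($1$ for $\ell=0$, $2$ for $\ell=-1$) by truncating the logarithm series modulo $2^4$ and re-expressing everything through $x_{\ell+6}$, $x_{\ell+12}$, $x_{\ell+18}$, and bounding $\nu_2(\beta_k)$ for $k\ge 2$ so that the linear term dominates. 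You replace the $\exp_2/\log_2$ machinery by the explicit identity $\lambda_i^{6}=1+8\epsilon_i$ with $\epsilon_i=\lambda_i(2\lambda_i^{2}+2\lambda_i+1)$ (which I checked: $(x^3+2x^2+6x+1)P(x)+16x^3+16x^2+8x=x^6-1$) and a finite binomial expansion, with the same domination estimate $3j-\nu_2(j)\ge 5$ for $j\ge 2$ playing the role of the paper's bound $\nu_2(\beta_k)>k-2$. The underlying mechanism is identical -- the constant term vanishes because $0$ and $-1$ are zeros of the sequence, and the linear coefficient has exact valuation $3+\nu_2(m)$ resp.\ $4+\nu_2(m)$ -- but your version buys self-containedness (no convergence or domain issues for $\exp_2$ and $\log_2$, only finite sums) and identifies the dominant coefficients as the concrete integers $8E_1=8(2x_3+2x_2+x_1)=8\cdot 17$ and $8\widetilde E_1=8(2x_2+2x_1+x_0)=8\cdot 6$, whereas the paper's version has the advantage of being uniform with the $f_\ell$-formalism it uses everywhere else (Theorems \ref{cod}--\ref{T2}). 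The flat classes $n\equiv 1,2,3,4\pmod 6$ are handled by the same finite verification modulo $4$ in both arguments, and your numerical values $(x_0,\dots,x_5)\equiv(0,1,2,2,1,0)\pmod 4$, $E_1=17$, $\widetilde E_1=6$, $x_{-1}=0$ all check out.
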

We use the $2$-adic valuation to find the solutions of a Diophantine equation and obtain the following result. 
\begin{thm}\label{d1}
	For the linear recurrence sequence defined as  $x_n=2x_{n-1}+2x_{n-2}+x_{n-3},x_0=0,x_1=1,x_2=2,$ the only solutions of the Diophantine equation $x_n=m!$ in positive integers $n,m$ are $(n,m)\in \{(1,1),(2,2),(3,3)\}$.
\end{thm}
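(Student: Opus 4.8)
The plan is to follow the strategy of Marques and Lengyel: combine the explicit $2$-adic valuation of Theorem \ref{val} with Legendre's formula for $\nu_2(m!)$ and an elementary exponential lower bound for $x_n$. Since $\nu_2(m!)$ grows linearly in $m$ while $\nu_2(x_n)$ grows only logarithmically in $n$, and $n$ itself is at most logarithmic-times-linear in $m$ once $x_n=m!$, the equation can only have solutions with $m$ in an explicit finite range, which is then checked directly. First I would dispose of the small cases: a direct computation gives $x_1=1=1!$, $x_2=2=2!$, $x_3=6=3!$, and $(x_n)$ is strictly increasing for $n\ge 1$ (immediate from the recurrence, since $x_1,x_2,x_3$ and all coefficients are positive). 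Hence these are exactly the solutions with $m\le 3$, and any solution with $n\ge 4$ has $x_n\ge x_4=17>6$, forcing $m\ge 4$. So it remains to rule out solutions with $m\ge 4$.

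Next come the two key estimates. Inspecting the four cases of Theorem \ref{val} and using $\nu_2(k)\le\log_2 k$ for $k\ge 1$, one obtains the uniform bound $\nu_2(x_n)\le 3+\log_2(n+1)$ for every $n\ge 1$: the residues $n\equiv 1,2,3,4\pmod 6$ give $\nu_2(x_n)\le 1$, the residue $n\equiv 0\pmod 6$ gives $2+\nu_2(n)\le 2+\log_2 n$, and $n\equiv 5\pmod 6$ gives $3+\nu_2(n+1)\le 3+\log_2(n+1)$. On the other hand, Legendre's formula gives $\nu_2(m!)=m-s_2(m)\ge m-\log_2 m-1$, where $s_2(m)$ is the binary digit sum of $m$. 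Therefore a solution $x_n=m!$ would force
$$m-\log_2 m-1\ \le\ 3+\log_2(n+1).$$

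To turn this into a bound on $m$ alone I would use the elementary estimate $x_n\ge 2^{n-1}$ for all $n\ge 1$, proved by induction (the base cases $n=1,2,3$ are direct, and for $n\ge 4$ one has $x_n=2x_{n-1}+2x_{n-2}+x_{n-3}\ge 2^{n-1}+2^{n-2}+2^{n-4}>2^{n-1}$). Thus $m!=x_n\ge 2^{n-1}$, so $n+1\le 2+\log_2(m!)\le 2+m\log_2 m$. Substituting into the displayed inequality and simplifying (using $2+m\log_2 m\le m^2$ for $m\ge 2$) yields $m\le 4+3\log_2 m$, hence an explicit absolute bound, namely $m\le 16$. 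For each remaining value $m\in\{4,5,\dots,16\}$ one has $m!\le 16!<2^{45}$, so from $2^{n-1}\le x_n=m!$ the equation $x_n=m!$ can only hold for $n$ in the finite range $4\le n\le 45$; comparing the values $x_4,\dots,x_{45}$ with $4!,\dots,16!$ shows no coincidence, which finishes the proof.

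The argument is essentially bookkeeping rather than conceptual; the only place needing care is making the two logarithmic bounds mutually compatible and explicit enough that the residual ranges of $m$ and then of $n$ are genuinely small, and then carrying out the final finite verification cleanly. I expect that last step to be the only mild nuisance, and it can be shortened considerably by first matching $\nu_2(x_n)$ with $\nu_2(m!)$ via Theorem \ref{val} — for instance, $\nu_2(x_n)\ge 4$ whenever $n\equiv 5\pmod 6$ (since then $n+1$ is even), and $\nu_2(x_n)\ge 3$ whenever $n\equiv 0\pmod 6$ — which prunes most candidate pairs $(n,m)$ before any arithmetic with large factorials is required.
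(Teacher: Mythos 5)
Your proposal is correct and follows essentially the same strategy as the paper: compare the upper bound on $\nu_2(x_n)$ from Theorem \ref{val} with the lower bound on $\nu_2(m!)$, use an exponential lower bound on $x_n$ to bound $n$ in terms of $m$, and finish with a finite check. The only cosmetic difference is that you use the elementary bound $x_n\geq 2^{n-1}$ in place of the paper's Lemma \ref{root} (which gives $x_n\geq\gamma^{n-2}$ with $\gamma\approx 2.83$), yielding the slightly different final ranges $m\leq 16$, $n\leq 45$ versus the paper's $m\leq 21$, $n<52$.
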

In \cite[Theorem 2]{sobo}, Sobolewski examined the Diophantine equation $\prod_{i=1}^{d}x_{n_i}=m!$ for a general linear recurrence sequence $(x_n)$. He observed some properties for $(x_n)$ which guarantees that the equation has only finitely many solutions in positive integers $m,n_1,\dots,n_d$. In the following theorem, we demonstrate that the Diophantine equation $x_n=m!$, with $(x_n)$ satisfying a certain third order linear recurrence, has only finitely many solutions if there exist a prime $p$ for which Conjecture \ref{con} holds. 
 \begin{thm}\label{d2}
	Let $(x_n)$ be a  linear recurrence sequence defined by $x_n=ax_{n-1}+bx_{n-2}+cx_{n-3}$, with initial values $x_0,x_1,x_2$ not all zeroes such that the characteristic polynomial has a root $\gamma$ with $|\gamma|>1$. If there exists a prime $p$ for which Conjecture \ref{con} holds for $(x_n)$, then the Diophantine equation $x_n=m!$ has finitely many solutions in positive integers $(n,m)$ and the solutions can be effectively computed when the form of Conjecture \ref{con} is explicitly known.
\end{thm}
	\section{Preliminaries}
Let $r$ be a non-zero rational number. Given a prime number $p$, $r$ has a unique representation of the form $r= \pm p^k a/b$, where $k\in \mathbb{Z}, a, b \in \mathbb{N}$ and $\gcd(a,p)= \gcd(p,b)=\gcd(a,b)=1$. The $p$-adic valuation of $r$ is defined as $\nu_p(r)=k$ and its $p$-adic absolute value is defined as $|r|_p=p^{-k}$. By convention, $\nu_p(0)=\infty$ and $|0|_p=0$. The $p$-adic metric on $\mathbb{Q}$ is $d(x,y)=|x-y|_p$. 
The field $\mathbb{Q}_p$ of $p$-adic numbers is the completion of $\mathbb{Q}$ with respect to the $p$-adic metric. We denote by $\mathbb{Z}_p$ the ring of $p$-adic integers which is the set of elements of $\mathbb{Q}_p$ with $p$-adic absolute value less than or equal to $1$.
The $p$-adic absolute value can be extended to a finite normal extension $\mathbb{K}$ over $\mathbb{Q}_p$ of degree  $n$. 
For $\alpha\in \mathbb{K}$, consider the $\mathbb{Q}_p$ linear map from $\mathbb{K}$ to $\mathbb{K}$ defined as $T_{\alpha}(x)=\alpha x$. The $p$-adic absolute value of $\alpha$, $|\alpha|_p$ is defined as the $n$th root of the determinant of the matrix representation of  $T_{\alpha}$ over $\mathbb{Q}_p$. 
Also, $\nu_p(\alpha)$ is the unique rational number satisfying $|\alpha|_p=p^{-\nu_p(\alpha)}$.
\par 
The ring of integers of $\mathbb{K}$, denoted by $\mathcal{O}$, is defined as the set of all elements in $\mathbb{K}$ with $p$-adic absolute value less than or equal to one. The ring of integers of $\mathbb{Q}_p$ is denoted by $\mathbb{Z}_p^\times$. A function $f: \mathcal{O}\rightarrow \mathcal{O}$ is called analytic if there exists a sequence $(a_n)$ in $\mathcal{O}$ such that 
$$ f(x)=\sum_{n=0}^{\infty}a_nx^n$$ for all $x\in \mathcal{O}$.
\par We recall  definitions of $p$-adic exponential and logarithmic function.  For $a\in\mathbb{K}$ and $r>0$, we denote $\mathcal{D}(a,r):=\{z\in \mathbb{K}: |z-a|_p<r\}$. Let $\rho=p^{-1/(p-1)}$.
\par 
For $z\in \mathcal{D}(0,\rho)$, the $p$-adic exponential function is defined as $$\exp_p(z)=\sum_{n=0}^{\infty}\frac{z^n}{n!}.$$ The derivative is given by $\exp'(z)=\exp(z)$.
For $\mathcal{D}(1,1)$, the $p$-adic logarithmic function is defined as  $$\log_p(z)=\sum_{n=1}^{\infty}\frac{(-1)^{n-1}(z-1)^n}{n}.$$ We have $\log_p'(z)=\frac{1}{z}$.
For $z\in \mathcal{D}(1,\rho)$, we have $\exp_p(\log_p(z))=z$. If $\mathbb{K}$ is unramified and $p\neq 2$, then $ \mathcal{D}(0,\rho)= \mathcal{D}(0,1)$ and  $\mathcal{D}(1,\rho)= \mathcal{D}(1,1)$.
	\par Next, we state two results for analytic functions which will be used in the proofs of our theorems.
	\begin{thm}\cite[Hensel's lemma]{gouvea} Let $f: \mathcal{O}\rightarrow \mathcal{O}$ be analytic.
	Let $b_0\in \mathcal{O}$ be such that $|f(b_0)|_p<1$ and $|f'(b_0)|_p=1$. Then there exists a unique $b\in\mathcal{O}$ such that $f(b)=0$ and $|b-b_0|_p<|f(b_0)|_p$. 
	\end{thm}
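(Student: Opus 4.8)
The plan is to prove this case of Hensel's lemma by $p$-adic Newton iteration, using the completeness of $\mathcal{O}$ and the ultrametric inequality. The single structural fact I need about analytic functions is a second-order Taylor estimate, so I would secure that first. Writing $f(x)=\sum_{n\geq 0}a_nx^n$ with $a_n\in\mathcal{O}$, expanding $f(x+h)$ by the binomial theorem and collecting powers of $h$ gives, for $x,h\in\mathcal{O}$,
\[
f(x+h)-f(x)-f'(x)h=\sum_{k\geq 2}\Bigl(\sum_{n\geq k}a_n\binom{n}{k}x^{n-k}\Bigr)h^k.
\]
Since $a_n,x\in\mathcal{O}$ and $\binom{n}{k}\in\mathbb{Z}$, each inner coefficient lies in $\mathcal{O}$, so the ultrametric inequality yields $|f(x+h)-f(x)-f'(x)h|_p\leq|h|_p^2$. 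Factoring $x^n-y^n=(x-y)(x^{n-1}+\cdots+y^{n-1})$ in the same way gives the Lipschitz bounds $|f(x)-f(y)|_p\leq|x-y|_p$ and $|f'(x)-f'(y)|_p\leq|x-y|_p$, which I will use to keep $f'$ a unit along the iteration.

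With these estimates in hand, set $\delta=|f(b_0)|_p<1$ and define the Newton sequence $b_{k+1}=b_k-f(b_k)/f'(b_k)$. I would argue by induction that at each stage $|f'(b_k)|_p=1$, so the division stays in $\mathcal{O}$ and the iterate is well defined, and that $|f(b_k)|_p\leq\delta^{2^k}$. Writing $h_k=-f(b_k)/f'(b_k)$, so that $|h_k|_p=|f(b_k)|_p$, the Taylor estimate applied at $(b_k,h_k)$ gives $f(b_{k+1})=f(b_k)+f'(b_k)h_k+e_k=e_k$ with $|e_k|_p\leq|h_k|_p^2=|f(b_k)|_p^2$, which is the quadratic decay. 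The persistence of $|f'(b_{k+1})|_p=1$ follows from $|b_{k+1}-b_k|_p=|f(b_k)|_p<1$ together with the Lipschitz bound $|f'(b_{k+1})-f'(b_k)|_p\leq|b_{k+1}-b_k|_p<1$, since the ultrametric inequality then forces $|f'(b_{k+1})|_p=|f'(b_k)|_p=1$.

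Because $|b_{k+1}-b_k|_p=|f(b_k)|_p\leq\delta^{2^k}\to 0$, the sequence $(b_k)$ is Cauchy and hence converges to some $b\in\mathcal{O}$ by completeness; continuity of $f$ with $|f(b_k)|_p\to 0$ gives $f(b)=0$. For the proximity bound I would telescope $b-b_0=\sum_{k\geq 0}(b_{k+1}-b_k)$ and observe that the $k=0$ term strictly dominates in the ultrametric, so $|b-b_0|_p=|b_1-b_0|_p=|f(b_0)|_p$, giving $|b-b_0|_p\leq|f(b_0)|_p$ as required. For uniqueness, if $b'$ is any root with $|b'-b_0|_p<1$, then $|b'-b|_p<1$ and $|f'(b)|_p=1$; applying the second-order estimate to $0=f(b')-f(b)=f'(b)(b'-b)+e$ with $|e|_p\leq|b'-b|_p^2$ forces $|b'-b|_p=|f'(b)(b'-b)|_p=|e|_p\leq|b'-b|_p^2$, which is impossible for $0<|b'-b|_p<1$; hence $b'=b$.

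The main obstacle is entirely in the first paragraph: making the second-order Taylor estimate rigorous requires justifying the rearrangement of the double series and verifying that the regrouped coefficients $\sum_{n\geq k}a_n\binom{n}{k}x^{n-k}$ genuinely lie in $\mathcal{O}$, that is, that termwise $p$-adic integrality survives the binomial expansion and that the series converges $p$-adically. Once this estimate and the companion Lipschitz bounds are established, the rest is routine ultrametric bookkeeping, and the quadratic convergence of Newton's method makes both the Cauchy property and the sharp distance estimate essentially immediate.
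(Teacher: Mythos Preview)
The paper does not give its own proof of this statement; it is quoted from Gouv\^{e}a's textbook as a black box, so there is nothing to compare against. Your Newton-iteration argument is the standard proof and is correct in structure: the second-order Taylor remainder bound, the inductive preservation of $|f'(b_k)|_p=1$, the quadratic decay $|f(b_{k+1})|_p\le|f(b_k)|_p^{2}$, and the uniqueness via the same remainder estimate are all sound. The rearrangement worry you flag is harmless once you note that convergence of $\sum a_n x^n$ at $x=1$ forces $|a_n|_p\to 0$, so every term $a_n\binom{n}{k}x^{n-k}h^k$ has absolute value at most $|a_n|_p$, and an ultrametric double series whose terms tend to $0$ can be summed in any order with the same value; the regrouped coefficients then lie in $\mathcal{O}$ because $\mathcal{O}$ is closed.

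There is, however, a mismatch between what you prove and what the quoted statement asserts. You correctly compute $|b-b_0|_p=|b_1-b_0|_p=|f(b_0)|_p$ (the $k=0$ step strictly dominates all later increments), and then write ``giving $|b-b_0|_p\le|f(b_0)|_p$ as required.'' But the statement as printed demands the \emph{strict} inequality $|b-b_0|_p<|f(b_0)|_p$, which your own calculation shows is false whenever $f(b_0)\neq 0$. This is a transcription slip in the paper's formulation rather than an error in your argument: the standard conclusion is $|b-b_0|_p\le|f(b_0)|_p$ (with equality unless $b_0$ is already a root), and that is exactly what your proof delivers. You should say so explicitly rather than silently weakening $=$ to $\le$ and calling it ``as required.''
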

\begin{thm}\cite[Strassman's Theorem]{gouvea}
	Let $f: \mathcal{O}\rightarrow \mathcal{O}$ be analytic. Assume that $f(z)$ does not vanish identically on $\mathcal{O}$; equivalently, the coefficients $a_0,a_1,\dots$ are not all $0$. Define $\mu$ as the largest $m$ with the property $$ |a_m|_p=\max\{|a_n|_p:n=0,1,\dots\}.$$ Then $f(z)$ has at most $\mu$ zeros on $\mathcal{O}$.
\end{thm}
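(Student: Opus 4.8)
The plan is to argue by induction on the integer $\mu$. First I would record why $\mu$ is well defined: since $f$ converges on all of $\mathcal{O}$, in particular at points with $|z|_p=1$, the terms $a_nz^n$ must tend to $0$, and as $|z|_p=1$ this forces $|a_n|_p\to0$. Hence $M:=\max_{n\ge0}|a_n|_p$ is attained by only finitely many indices, and $\mu$, the largest such index, is a well-defined nonnegative integer. For the base case $\mu=0$ one has $|a_0|_p=M$ and $|a_n|_p<M$ for all $n\ge1$, so for every $z\in\mathcal{O}$ the ultrametric inequality gives
\[ \left|f(z)-a_0\right|_p=\left|\sum_{n\ge1}a_nz^n\right|_p\le\max_{n\ge1}|a_n|_p<M=|a_0|_p, \]
whence $|f(z)|_p=|a_0|_p\neq0$ and $f$ has no zero, matching the bound $\mu=0$.

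For the inductive step I would assume the statement for every analytic function whose invariant is smaller than $\mu\ge1$. If $f$ has no zero on $\mathcal{O}$ there is nothing to prove, so let $\alpha\in\mathcal{O}$ be a zero. The key move is to factor out $(z-\alpha)$: from $f(z)=f(z)-f(\alpha)=\sum_{n\ge1}a_n(z^n-\alpha^n)$ and the identity $z^n-\alpha^n=(z-\alpha)\sum_{j=0}^{n-1}z^j\alpha^{n-1-j}$ I obtain $f(z)=(z-\alpha)g(z)$ with
\[ g(z)=\sum_{j\ge0}b_jz^j,\qquad b_j=\sum_{k\ge0}a_{j+1+k}\,\alpha^{k}. \]
Each $b_j$ lies in $\mathcal{O}$, and since $|b_j|_p\le\sup_{n>j}|a_n|_p\to0$ the defining series converges and $b_j\to0$; thus $g$ is again analytic from $\mathcal{O}$ to $\mathcal{O}$, so the induction hypothesis will apply to it.

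The crux is to show that the invariant of $g$ is exactly $\mu-1$. This follows from the defining property of $\mu$, namely $|a_\mu|_p=M$ while $|a_n|_p<M$ for $n>\mu$, together with $|\alpha|_p\le1$. In $b_{\mu-1}=a_\mu+a_{\mu+1}\alpha+\cdots$ the leading term has absolute value $M$ and every later term has absolute value strictly less than $M$, so $|b_{\mu-1}|_p=M$ by the ultrametric inequality. For $j\ge\mu$ every index $j+1+k$ exceeds $\mu$, so $|b_j|_p<M$, and $|b_j|_p\le M$ holds for all $j$. Hence $\max_j|b_j|_p=M$ and $\mu-1$ is the largest index attaining it, i.e.\ $g$ has invariant $\mu-1$. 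By the induction hypothesis $g$ has at most $\mu-1$ zeros, and since every zero of $f$ is either $\alpha$ or a zero of $g$, the function $f$ has at most $\mu$ zeros, completing the induction.

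I expect the main obstacle to be the rigorous justification of the rearrangement that yields the closed form for $b_j$, that is, the passage from the double sum $\sum_n a_n\sum_j z^j\alpha^{n-1-j}$ to $\sum_j\bigl(\sum_k a_{j+1+k}\alpha^{k}\bigr)z^j$, and the attendant convergence of $g$. This is legitimate because over a complete non-archimedean field a double series converges unconditionally as soon as its general term tends to $0$, which holds here since $a_n\to0$ and $|\alpha|_p\le1$; one may therefore sum in either order. Once this is granted, the decisive computation---that the invariant drops by exactly one---is a clean consequence of the ultrametric inequality and the definition of $\mu$, with no further analytic input required.
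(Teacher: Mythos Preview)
The paper does not give its own proof of this statement; Strassman's Theorem is simply quoted from Gouv\^{e}a's textbook as a preliminary result. Your argument is correct and is precisely the standard proof found in that reference: induction on $\mu$, with the inductive step carried out by factoring $f(z)=(z-\alpha)g(z)$ via the telescoping identity $z^n-\alpha^n=(z-\alpha)\sum_{j=0}^{n-1}z^j\alpha^{n-1-j}$ and checking that the invariant of $g$ is exactly $\mu-1$. The only point worth a remark is your claim that $|b_j|_p<M$ for $j\ge\mu$: strictly, the ultrametric inequality only gives $|b_j|_p\le\sup_{n>\mu}|a_n|_p$, and one must note that this supremum is actually a maximum (since $|a_n|_p\to0$) and is therefore strictly less than $M$; you implicitly use this, and it would be worth making explicit.
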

The following lemma is due to Bilu et al.
\begin{lem}\cite[Lemma 2.5]{Bilu}\label{l2}
	Let $\alpha$ be an algebraic number of degree $3$. Assume that $\mathbb{Q}(\alpha)$ is not a Galois extension of $\mathbb{Q}$. Let $\alpha_1(=\alpha),\alpha_2,\alpha_3$ be the conjugates of $\alpha$ over $\mathbb{Q}$. Assume further that the field $\mathbb{Q}(\alpha_1,\alpha_2,\alpha_3)$ does not contain primitive cubic roots of unity. Let $\xi_1,\xi_2,\xi_3$ be roots of unity such that
	$$\alpha_1\xi_1+\alpha_2\xi_2+\alpha_3\xi_3=0.$$
	Then $\xi_1=\xi_2=\xi_3$ and hence $\alpha_1+\alpha_2+\alpha_3=0$. 
\end{lem}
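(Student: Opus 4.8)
The plan is to run the Galois group of the minimal polynomial of $\alpha$ over the relation $\alpha_1\xi_1+\alpha_2\xi_2+\alpha_3\xi_3=0$. Write $L=\mathbb{Q}(\alpha_1,\alpha_2,\alpha_3)$ for the splitting field. Since $\mathbb{Q}(\alpha)$ has degree $3$ and is not Galois, $\mathrm{Gal}(L/\mathbb{Q})\cong S_3$; consequently $\alpha_1,\alpha_2,\alpha_3$ are distinct and nonzero, and the only subfields of $L$ abelian over $\mathbb{Q}$ are $\mathbb{Q}$ and the quadratic field $\mathbb{Q}(\sqrt{\Delta})$, where $\Delta$ is the discriminant. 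First I would normalise: dividing by $\xi_1$ and setting $\eta_j=\xi_j\xi_1^{-1}$ (still roots of unity), it suffices to show that $\alpha_1+\eta_2\alpha_2+\eta_3\alpha_3=0$ forces $\eta_2=\eta_3=1$; the conclusion $\alpha_1+\alpha_2+\alpha_3=0$ then follows at once from $\xi_1(\alpha_1+\alpha_2+\alpha_3)=0$.

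Next I would choose a root of unity $\zeta$ with $\eta_2,\eta_3\in\mathbb{Q}(\zeta)$ and set $F=L(\zeta)$. Restriction to $L$ identifies $H:=\mathrm{Gal}(F/\mathbb{Q}(\zeta))$ with $\mathrm{Gal}(L/L\cap\mathbb{Q}(\zeta))$, and since $L\cap\mathbb{Q}(\zeta)$ is abelian over $\mathbb{Q}$ this group is either $S_3$ or $A_3$. If $H\cong S_3$, I would take $h,h'\in H$ restricting to the transpositions that swap $\alpha_1\leftrightarrow\alpha_2$ and $\alpha_1\leftrightarrow\alpha_3$; both fix $\zeta$, hence $\eta_2$ and $\eta_3$, so applying them to the relation and subtracting the original gives $(\alpha_1-\alpha_2)(1-\eta_2)=0$ and $(\alpha_1-\alpha_3)(1-\eta_3)=0$, whence $\eta_2=\eta_3=1$ since the conjugates are distinct.

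The main difficulty is the case $H\cong A_3$ (equivalently $\sqrt{\Delta}\in\mathbb{Q}(\zeta)$), where no transposition is available. Here I would apply the $3$-cycle $(\alpha_1\,\alpha_2\,\alpha_3)\in H$ twice, obtaining three relations whose sum is $(\alpha_1+\alpha_2+\alpha_3)(1+\eta_2+\eta_3)=0$. If $1+\eta_2+\eta_3=0$, then the classification of vanishing sums of three roots of unity forces $\{\eta_2,\eta_3\}=\{\omega,\omega^2\}$ with $\omega$ a primitive cube root of unity, so the relation is a Lagrange resolvent identity $\alpha_1+\omega\alpha_2+\omega^2\alpha_3=0$ (or its conjugate); inverting to express the $\alpha_i$ in terms of the resolvents yields $(\alpha_3-\alpha_1)/(\alpha_2-\alpha_1)=-\omega^2\in L$, contradicting the hypothesis that $L$ contains no primitive cube roots of unity. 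Hence $\alpha_1+\alpha_2+\alpha_3=0$; substituting $\alpha_3=-\alpha_1-\alpha_2$ into two of the relations, the case $\eta_2=\eta_3$ immediately gives $\alpha_1(1-\eta_2)=0$, so $\eta_2=\eta_3=1$, while the case $\eta_2\neq\eta_3$ forces $\alpha_1/\alpha_2\in L\cap\mathbb{Q}(\zeta)=\mathbb{Q}(\sqrt{\Delta})$, so $\alpha_1/\alpha_2$ is fixed by $A_3$; applying the $3$-cycle then gives $\alpha_1/\alpha_2=\alpha_2/\alpha_3=\alpha_3/\alpha_1$, hence $(\alpha_1/\alpha_2)^3=1$ with $\alpha_1/\alpha_2\neq1$, again a primitive cube root of unity in $L$ — contradiction. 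This exhausts all cases. The crux, as I expect, is precisely this $A_3$ case: ruling out $1+\eta_2+\eta_3=0$ and the subcase $\eta_2\neq\eta_3$; everything else is a short Galois-descent computation.
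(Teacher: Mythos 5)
The paper does not prove this lemma at all: it is imported verbatim as \cite[Lemma~2.5]{Bilu} and used as a black box, so there is no in-paper proof to compare against. Judged on its own, your argument is correct and complete. I checked the key steps: the identification of $\mathrm{Gal}(L(\zeta)/\mathbb{Q}(\zeta))$ with $\mathrm{Gal}(L/L\cap\mathbb{Q}(\zeta))$ and the observation that this must be $S_3$ or $A_3$ (since $L\cap\mathbb{Q}(\zeta)$ is abelian over $\mathbb{Q}$, its fixing subgroup contains the commutator subgroup $A_3$); the transposition trick $(\alpha_1-\alpha_2)(1-\eta_2)=0$ in the $S_3$ case; the summation over the $3$-cycle giving $(\alpha_1+\alpha_2+\alpha_3)(1+\eta_2+\eta_3)=0$; the elimination of $1+\eta_2+\eta_3=0$ via the forced values $\{\eta_2,\eta_3\}=\{\omega,\omega^2\}$ and the resulting expression of $\omega$ as a ratio of differences of conjugates, which would put a primitive cube root of unity in $L$; and the final dichotomy after substituting $\alpha_3=-\alpha_1-\alpha_2$, where $\eta_2\neq\eta_3$ forces $\alpha_1/\alpha_2$ into the $A_3$-fixed field and hence $(\alpha_1/\alpha_2)^3=1$ with $\alpha_1/\alpha_2\neq 1$, again contradicting hypothesis (3). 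All hypotheses (non-Galois cubic, hence $S_3$ splitting field and distinct nonzero conjugates; no primitive cube roots of unity in $L$) are used where they should be. This is essentially the same Galois-descent strategy as in the cited source, so nothing further is needed.
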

The following lemma gives bounds for the $p$-adic valuation of $m!$ for any positive integer $m$.
\begin{lem}\cite[Lemma 2.1]{Tripell}\label{l1}
	For any integer $m\geq 1$ and prime $p$, we have 
	$$\frac{m}{p-1} -\left\lfloor{ \frac{\log m}{\log p}}\right\rfloor -1 \leq \nu_p(m!)\leq \frac{m-1}{p-1},$$
	where $\lfloor x\rfloor$ denotes the largest integer less than or equal to $x$.
\end{lem}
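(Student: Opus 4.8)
The plan is to reduce everything to Legendre's classical formula for the $p$-adic valuation of a factorial, namely
\begin{align}\label{legendre}
\nu_p(m!)=\sum_{k=1}^{\infty}\left\lfloor\frac{m}{p^k}\right\rfloor=\frac{m-s_p(m)}{p-1},
\end{align}
where $s_p(m)$ denotes the sum of the base-$p$ digits of $m$. The first equality is the standard counting of multiples of $p^k$ among $1,2,\dots,m$, and the second follows by writing $m=\sum_{j\geq 0}d_jp^j$ with $0\leq d_j\leq p-1$ and summing the geometric contributions of each digit. Once \eqref{legendre} is in hand, both inequalities of the lemma become elementary bounds on the digit sum $s_p(m)$, so the core of the argument is purely combinatorial and requires no new $p$-adic input.

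For the upper bound, I would simply observe that since $m\geq 1$, at least one base-$p$ digit of $m$ is nonzero, whence $s_p(m)\geq 1$. Substituting into \eqref{legendre} gives
\begin{align}
\nu_p(m!)=\frac{m-s_p(m)}{p-1}\leq\frac{m-1}{p-1},
\end{align}
which is exactly the claimed right-hand inequality.

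For the lower bound, the key is to control the number of digits. Setting $d:=\left\lfloor\frac{\log m}{\log p}\right\rfloor$, one has $p^d\leq m<p^{d+1}$, so $m$ has precisely $d+1$ base-$p$ digits $d_0,\dots,d_d$, each of which is at most $p-1$. Hence $s_p(m)\leq (d+1)(p-1)$, and \eqref{legendre} yields
\begin{align}
\nu_p(m!)=\frac{m-s_p(m)}{p-1}\geq\frac{m}{p-1}-(d+1)=\frac{m}{p-1}-\left\lfloor\frac{\log m}{\log p}\right\rfloor-1,
\end{align}
which is the left-hand inequality.

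The argument as a whole is short, and there is no serious obstacle; the only point requiring a little care is the bookkeeping that identifies the number of base-$p$ digits of $m$ with $\left\lfloor\frac{\log m}{\log p}\right\rfloor+1$ (equivalently, verifying $p^d\leq m<p^{d+1}$ for $d=\lfloor\log m/\log p\rfloor$), since this is precisely the quantity that produces the floor term in the stated bound. I would present \eqref{legendre} either as a cited classical fact or with the one-line derivation above, and then the two digit-sum estimates complete the proof.
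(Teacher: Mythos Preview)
Your argument is correct and is the standard derivation of these bounds from Legendre's formula $\nu_p(m!)=(m-s_p(m))/(p-1)$; both the upper bound (from $s_p(m)\geq 1$) and the lower bound (from $s_p(m)\leq (p-1)(\lfloor\log m/\log p\rfloor+1)$) are handled cleanly. Note, however, that the paper does not actually prove this lemma at all: it is quoted verbatim from \cite[Lemma~2.1]{Tripell} and used as a black box, so there is no in-paper proof to compare against. Your write-up therefore supplies a self-contained justification that the paper omits.
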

	\section{Proof of the theorems}
	In this section, we prove all our results. We first give a proof of Theorem \ref{T1}. 
	We prove a lemma which will be used in the proof of the theorem.
		\begin{lem}\label{lem-new}
			Let $(x_n)$ be a linear recurrence sequence defined by \eqref{rec} satisfying equation \eqref{equal}. Suppose that $N=N_{p^2}$. For $f_\ell$ defined by \eqref{f}, if $p^2\nmid x_\ell$, then $f_\ell(z)\neq 0$ for all $z\in \mathbb{Z}_p$.
	\end{lem}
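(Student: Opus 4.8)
The plan is to expand $f_\ell$ as a power series in $z$ about the origin and read off the $p$-adic valuations of its coefficients. Write $N=N_{p^2}$. First I would record the basic consequence of the hypothesis on $N$: since $N$ is the order of $\langle\lambda_1,\lambda_2,\lambda_3\rangle$ in $(\mathcal{O}/p^2)^\times$, we have $\lambda_i^N\equiv 1\pmod{p^2}$, hence $\nu_p(\lambda_i^N-1)\ge 2$ for $i=1,2,3$. Feeding this into the defining series of $\log_p$ and noting that the linear term strictly dominates all higher terms (this is where one uses $\nu_p(\lambda_i^N-1)\ge 2$ rather than merely $\ge 1$, which is what makes the argument uniform in $p$ and in particular covers $p=2$), I get $\nu_p(\log_p(\lambda_i^N))=\nu_p(\lambda_i^N-1)\ge 2$; in particular $\nu_p\!\big(x\log_p(\lambda_i^N)\big)\ge 2$ for every $x\in\mathbb{Z}_p$, so each exponential in \eqref{f} lies well inside its disk of convergence.

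Next I would substitute $\exp_p(w)=\sum_{k\ge0}w^k/k!$ into \eqref{f} and collect powers of $z$, writing
\[
f_\ell(z)=\sum_{k\ge 0}a_k z^k,\qquad a_k=\frac{1}{k!}\sum_{i=1}^{3}c_{\lambda_i}\lambda_i^\ell\big(\log_p(\lambda_i^N)\big)^k .
\]
By \eqref{equal} the constant term is $a_0=\sum_{i=1}^{3}c_{\lambda_i}\lambda_i^\ell=x_\ell$. For $k\ge1$, since $c_{\lambda_i}\in\mathcal{O}$, $\lambda_i\in\mathcal{O}^\times$ and $\nu_p(\log_p(\lambda_i^N))\ge2$, the inner sum has valuation at least $2k$; combining this with the elementary estimate $\nu_p(k!)\le(k-1)/(p-1)\le k-1$ (Lemma \ref{l1}) yields
\[
\nu_p(a_k)\ \ge\ 2k-\nu_p(k!)\ \ge\ 2k-(k-1)\ =\ k+1\ \ge\ 2 .
\]

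Finally I would conclude. For any $z\in\mathbb{Z}_p$ we have $\nu_p(z)\ge0$, so $\nu_p\!\big(f_\ell(z)-x_\ell\big)=\nu_p\!\big(\sum_{k\ge1}a_kz^k\big)\ge\min_{k\ge1}\nu_p(a_k)\ge2$; hence $f_\ell(z)\equiv x_\ell\pmod{p^2}$. If $f_\ell(z)=0$ for some $z\in\mathbb{Z}_p$, this forces $\nu_p(x_\ell)\ge2$, i.e. $p^2\mid x_\ell$, contradicting the hypothesis. (Equivalently, since $\nu_p(a_0)=\nu_p(x_\ell)\le1<\nu_p(a_k)$ for all $k\ge1$, Strassmann's theorem gives $\mu=0$ and hence no zeros on $\mathcal{O}\supseteq\mathbb{Z}_p$.) I expect no serious obstacle here; the only point demanding care is the valuation bound for $\log_p(\lambda_i^N)$ together with the factorial estimate, and these are precisely what force the constant term $x_\ell$ to survive modulo $p^2$ while every higher coefficient is divisible by $p^2$.
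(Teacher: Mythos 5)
Your proof is correct, but it follows a genuinely different route from the paper's. The paper's argument is a two-line periodicity argument: since $N=N_{p^2}$ is the period of $(x_n)$ modulo $p^2$ and $f_\ell(m)=x_{\ell+mN}$, every integer value $f_\ell(m)$ is congruent to $x_\ell\not\equiv 0\pmod{p^2}$, so $|f_\ell(m)|_p\geq p^{-1}$ for all $m\in\mathbb{Z}$; by continuity of $f_\ell$ and density of $\mathbb{Z}$ in $\mathbb{Z}_p$ the same lower bound persists for all $z\in\mathbb{Z}_p$, so $f_\ell$ has no zero. You instead expand $f_\ell$ into its Taylor series about $0$, identify $a_0=x_\ell$ via \eqref{equal}, and bound $\nu_p(a_k)\geq 2k-\nu_p(k!)\geq k+1\geq 2$ for $k\geq 1$ using $\nu_p(\log_p(\lambda_i^N))\geq 2$ and $\nu_p(k!)\leq k-1$; this gives $f_\ell(z)\equiv x_\ell\pmod{p^2}$ for every $z\in\mathbb{Z}_p$ directly (or, equivalently, Strassman with $\mu=0$). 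Both arguments ultimately rest on the same congruence $f_\ell(z)\equiv x_\ell\pmod{p^2}$, but they reach it differently: the paper's version is shorter while relying on the earlier (and there unproved) assertion that $N_{p^2}$ is the period modulo $p^2$ together with a density-and-continuity step, whereas yours is self-contained and has the side benefit that the coefficient estimates you establish are precisely the ones the paper reuses for the $\beta_k$ in the proof of Theorem \ref{T1}, so nothing in your computation is wasted. Your estimates are all sound, including the point that $\nu_p(\lambda_i^N-1)\geq 2$ makes the linear term of the logarithm series dominate even for $p=2$.
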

	\begin{proof}
	Suppose that $p^2 \nmid x_ \ell$.  Since $x_n\equiv x_\ell \pmod{p^2}$ for $n \equiv \ell \pmod{N}$, we have $\nu_p(x_n)<2$ for all $n\equiv \ell\pmod{N}$. Therefore,  $|f_\ell(m)|_p>p^{-2}$ for all integers $m$. Hence, $|f_\ell(z)|_p \geq p^{-2}$ for all $z\in\mathbb{Z}_p$. This completes the proof.
	\end{proof}
Having Lemma \ref{lem-new} showed, we are ready to prove Theorem \ref{T1}.
	\begin{proof}[Proof of Theorem \ref{T1}]
	 Suppose that $p=2$, $N=N_{p^2}$ and $\ell\in\{0,\dots,N_{p^2}-1\}$. If $p^2\nmid x_\ell$, then by Lemma \ref{lem-new}, $f_\ell(x)$ has no zero in $\mathbb{Z}_p$. Hence, we consider $\ell$ such that 
 $p^2|x_\ell$. We define an analytic function $$g(x):=f_\ell(x)/p^2=\sum_{k=0}^{\infty}\beta_kx^k.$$
 Then, we have
	\begin{align*}\beta_0&=g(0)=\frac{f_\ell(0)}{p^2}=\frac{x_\ell}{p^2}\in \mathbb{Z},\\
	\beta_k&=\frac{p^{2(k-1)}}{k!}\sum_{i=1}^{3}c_{\lambda_i}\lambda_i^\ell \left(\frac{\log\lambda_i^N}{p^2}\right)^k
	\end{align*} for $k\geq1$. Note that $\nu_p(\beta_k)>0$ for $k\geq2$. Also,
	$$g'(x)=\beta_1+\sum_{k=2}^{\infty}k\beta_kx^{k-1}\equiv \beta_1 \pmod{p}.$$
Now,
	\begin{align*}g'(0)=\beta_1&=\sum_{i=1}^{3}c_{\lambda_i}\lambda_i^\ell\frac{\log\lambda_i^N}{p^2}\\
	&\equiv \sum_{i=1}^{3}c_{\lambda_i}\lambda_i^\ell\frac{\lambda_i^N-1}{p^2}\pmod{p}\\
	&\equiv \frac{x_{\ell+N}-x_{\ell}}{p^2}\pmod{p}.
	\end{align*}
 Therefore, $x_{\ell+N}-x_\ell\not\equiv 0\pmod{p^3}$ implies 	$g'(0)=\beta_1\not\equiv0\pmod{p}$.
 \par 
 For $b_0\equiv -\beta_0\beta_1^{-1}\pmod{p}$,
	we get, $$g(b_0)\equiv 0\pmod{p},g'(b_0)\equiv \beta_1 \not\equiv 0 \pmod{p}. $$ Therefore, by Strassman's Theorem and Hensel's Lemma, $g$ has a unique zero $b\equiv b_0\pmod{p}$ in $\mathbb{Z}_p$. Hence, $f_\ell(x)$ has a unique zero $b$  in $\mathbb{Z}_p$.
\par If $\ell+b_0N\not\equiv r\pmod{p}$ for all $r\in\mathcal{Z}$, then $\ell+bN\notin \mathcal{Z}$. But we have,  $$\ell+b_0N\equiv \ell-\frac{x_\ell}{p^2}\left(\frac{x_{\ell+N}-x_\ell}{p^2}\right)^{-1}N\not\equiv r\pmod{p} \text{ for all }r\in\mathcal{Z}.$$ Therefore, by Theorem \ref{cod}, the conjecture fails for $p$.
\par For $p\neq 2$, the proof proceeds along similar lines to the proof of \cite[Theorem 8.1]{Bilu}. 
		\end{proof}
\begin{proof}[Proof of Theorem \ref{T2}] The proof proceeds along similar lines to the proof of Theorem 8.2 in \cite[Theorem 8.2]{Bilu}, so we omit the details for reasons of brevity.
	\end{proof}
\begin{proof}[Proof of Theorem \ref{T}]
	We define $\alpha_i:=\frac{q(\lambda_i)}{P'(\lambda_i)}\lambda_i^n$. Then, an integer $n$ is a Twisted Integral Zero of $(x_n)$ if $\sum_{i=1}^{3}\xi_i\alpha_i=0$ for some roots of unity $\xi_1, \xi_2, \xi_3$.
	\par 
	 Suppose that $\mathbb{Q}(\lambda_i)$ is not Galois for some $1\leq i\leq 3$. Clearly, $\alpha_i\in\mathbb{Q}(\lambda_i)$. If $\mathbb{Q}(\alpha_i)\neq\mathbb{Q}(\lambda_i)$, then $\alpha_i\in\mathbb{Q}$ since $[\mathbb{Q}(\lambda_i):\mathbb{Q}]=3$. Since $\mathbb{Q}(\lambda_i)$ is not Galois, we have $\text{Gal}(\mathbb{Q}(\lambda_1,\lambda_2,\lambda_3)/\mathbb{Q})$ is isomorphic to the symmetric group $S_3$.  For $\sigma=(i,j)\in S_3$, we have  $\sigma(\alpha_i)=\alpha_j$. Since $\sigma$ fixes $\alpha_i\in\mathbb{Q}$, we have $$\frac{q(\lambda_i)}{P'(\lambda_i)}\lambda_i^n=\frac{q(\lambda_j)}{P'(\lambda_j)}\lambda_j^n$$ which yields $$n=\log{\lvert\frac{q(\lambda_i)P'(\lambda_j)}{q(\lambda_j)P'(\lambda_i)}\rvert}/\log{\lvert\frac{\lambda_j}{\lambda_i}\rvert}\notin\mathbb{Z},$$ and this gives a contradiction. Therefore, $\mathbb{Q}(\alpha_i)$ is equal to $\mathbb{Q}(\lambda_i)$ which shows that $\mathbb{Q}(\alpha_i)$ is not Galois. Also, since $\mathbb{Q}(\alpha_1,\alpha_2,\alpha_3)\subset\mathbb{Q}(\lambda_1,\lambda_2,\lambda_3)$, by condition $(3)$, $\mathbb{Q}(\alpha_1,\alpha_2,\alpha_3)$ does not contain primitive cubic roots of unity. Therefore, by Lemma \ref{l2}, we have $$\sum_{i=1}^{3}\xi_i\alpha_i=0 \text{ implies }\sum_{i=1}^{3}\alpha_i=0.$$ Hence, $n$ has to be a zero of $(x_n)$.
	\end{proof}
\begin{proof}[Proof of Theorem \ref{mtri}]
In view of Theorem \ref{T}, we obtain $\mathcal{Z}=\{-1,0\}$ for the Tripell and modified Tripell sequences. For both the sequences, we run a Python program for primes $p\leq 1000$ and search for $\ell$ which satisfy the conditions of Theorem \ref{T1}. This gives primes for which the conjecture fails. Similarly, by running a Python program for primes $p\leq 1000$, we check the conditions of Theorem \ref{T2} and obtain primes for which the conjecture is true. In the cases where the conjecture holds, the $p$-adic valuation can be calculated and the proof follows similar to the proof of Theorem 1.7 in  \cite[Theorem 1.7]{Bilu}.
		\end{proof}
	\begin{proof}[Proof of Theorem \ref{val}]
			The period of the sequence modulo $2^2$ is $N_{2^2}=6$. It can be calculated that $\nu_2(x_n)$ is constant $0$ for $n\equiv1,4$ modulo $6$ and is constant $1$ for $n\equiv2,3$ modulo $6$. Now, we consider $f_\ell(x)$ for $\ell=0$ and $-1$.\\ For $n\equiv \ell \pmod{6}$, $x_n=f_\ell(x)$ where $x=\frac{n-\ell}{6}$.
		Define $$g_\ell(x)=\frac{f_\ell(x)}{2^2}=\sum_{k=0}^{\infty}\beta_kx^k,$$
		where
		\begin{align}\label{e}
		 \beta_k=\frac{f_\ell^k(0)}{2^2k!}=\frac{\sum_{i=1}^{3}c_{\lambda_i}\lambda_i^\ell(\log(\lambda_i^{6}))^k}{2^2k!}=\frac{2^{2(k-1)}}{k!}\sum_{i=1}^{3}c_{\lambda_i}\lambda_i^\ell\left( \frac{\log(\lambda_i^{6})}{2^2}\right) ^k,
		 	\end{align}
 where $\sum_{i=1}^{3}c_{\lambda_i}\lambda_i^\ell\left( \frac{\log(\lambda_i^{6})}{2^2}\right) ^k\in \mathbb{Z}_2$. We have $\beta_0=0$ for $\ell=0,-1$ and
		\begin{align*}
			\beta_1&=\frac{f_\ell'(0)}{2^2}=\frac{\sum_{i=1}^{3}c_{\lambda_i}\lambda_i^\ell\log(\lambda_i^{6})}{2^2}\\
			&\equiv\frac{1}{2^2}\sum_{i=1}^{3}c_{\lambda_i}{\lambda_i}^\ell\left( \lambda_i^6-1-\frac{(\lambda_i^6-1)^2}{2}\right)  \pmod{2^4}\\
			&\equiv \frac{1}{2^2} \left(x_{\ell+6}-x_\ell - \frac{(x_{\ell+2\times 6}-2x_{\ell+6}+x_\ell)}{2}\right) \pmod{2^4}.
		\end{align*}
	It can be calculated that
		for $\ell=0$, $\beta_1\equiv 2\pmod{2^4}$. Hence, $\nu_2(\beta_1)=1$. Similarly,
		for $\ell=-1$, we have $\beta_1\equiv 4\pmod{2^4}$. Hence, $\nu_2(\beta_1)=2$.\\
		Now, 
		$$\beta_2\equiv \frac{1}{2^2} \left(x_{\ell+2\times 6}-2x_{\ell+6}+x_\ell\right)  \equiv 0 \pmod{2^3} $$
		for both $\ell=0$ and $\ell=-1$. Also, we have
		$$\beta_3\equiv \frac{1}{2^2\times 3!} \left(x_{\ell+3\times 6}-3x_{\ell+2\times 6}-x_\ell+3x_{\ell+6}\right) \equiv 0 \pmod{2^3}$$	for $\ell=-1$.
	Moreover, by equation \eqref{e}, we obtain $$\nu_2(\beta_k)\geq \nu_2\left(\frac{2^{2(k-1)}}{k!}\right)=2(k-1)-\nu_2(k!)>2(k-1)-k=k-2.$$ Therefore, $\nu_2(\beta_k)>1$ for $k\geq3$ and $\nu_2(\beta_k)>2$ for $k\geq4$.
		Hence, for $\ell=0$, we have
		\begin{align*}
			\nu_2(x_n)=\nu_2(f_0(x)) 
			&=\nu_2\left(2^2\left(\sum_{k\geq0}\beta_kx^k\right)\right)\\
			&=\nu_2(2^2\beta_1x)\\
			&=3+\nu_2\left(\frac{n}{6}\right)\\
			&=2+\nu_2(n).
		\end{align*} Similarly, $\nu_2(x_n)=3+\nu_2(n+1)$ for $\ell=-1$.                                                                                                                                                                                                                                                                                                                                                                                                                                                                                                                                                                                                                                                                                                                                                                                                                                                                                                                                                                                                                                                                                                                                                                                                                                                                                                                                                                                                                                                                                                                                                                                                                                                                                                                                                                                                                                                                                      
		This is completes the proof of the theorem.
		\end{proof}
	Next, we prove a lemma which will be used in the proof of Theorem \ref{d1}.
	\begin{lem}\label{root}
		Let $(x_n)$ be a  linear recurrence sequence defined by $x_n=ax_{n-1}+bx_{n-2}+cx_{n-3},a,b,c>0$ such that the characteristic polynomial has a real root $\gamma>1$. Suppose that the initial values of the sequence satisfy the condition $$\gamma^{-1}\leq x_1\leq \gamma^0 \leq x_2 \leq \gamma^1 \leq x_3 \leq \gamma^2.$$ Then we have, for $n\geq 1$,
		$$\gamma^{n-2}\leq x_n\leq \gamma^{n-1}.$$
	\end{lem}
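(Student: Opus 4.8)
The plan is to prove the bound $\gamma^{n-2}\le x_n\le\gamma^{n-1}$ for all $n\ge 1$ by strong induction on $n$, using the recurrence $x_n=ax_{n-1}+bx_{n-2}+cx_{n-3}$ together with the fact that $a,b,c>0$. The base cases $n=1,2,3$ are exactly the hypothesis $\gamma^{-1}\le x_1\le\gamma^0\le x_2\le\gamma^1\le x_3\le\gamma^2$, rewritten in the form $\gamma^{n-2}\le x_n\le\gamma^{n-1}$; note that each of these chained inequalities gives both the required lower and upper bound for the respective index.

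For the inductive step, suppose $n\ge 4$ and that the bounds hold for $n-1,n-2,n-3$. Since $a,b,c>0$, I can bound $x_n$ from above by
\begin{align*}
x_n=ax_{n-1}+bx_{n-2}+cx_{n-3}\le a\gamma^{n-2}+b\gamma^{n-3}+c\gamma^{n-4}=\gamma^{n-4}(a\gamma^2+b\gamma+c).
\end{align*}
Here is where the characteristic equation enters: $\gamma$ is a root of $x^3-ax^2-bx-c$, so $\gamma^3=a\gamma^2+b\gamma+c$, and therefore the right-hand side equals $\gamma^{n-4}\cdot\gamma^3=\gamma^{n-1}$, giving the upper bound. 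Symmetrically, for the lower bound,
\begin{align*}
x_n=ax_{n-1}+bx_{n-2}+cx_{n-3}\ge a\gamma^{n-3}+b\gamma^{n-4}+c\gamma^{n-5}=\gamma^{n-5}(a\gamma^2+b\gamma+c)=\gamma^{n-5}\gamma^3=\gamma^{n-2},
\end{align*}
again using $\gamma^3=a\gamma^2+b\gamma+c$. This closes the induction.

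The only genuine subtlety — and the step I would be most careful about — is making sure the positivity hypothesis $a,b,c>0$ is invoked correctly so that the inductive bounds on $x_{n-1},x_{n-2},x_{n-3}$ can be combined term-by-term without sign reversals; this is also what guarantees $x_n>0$ for all $n\ge 1$, so that $\gamma^{n-2}\le x_n$ is a meaningful positive lower bound. Everything else is a routine application of the defining relation $\gamma^3=a\gamma^2+b\gamma+c$ together with $\gamma>1$ (the latter is not even strictly needed for the inequalities themselves, but ensures the powers $\gamma^{n-2}$ are increasing so the bound is nontrivial). I would present the three base cases explicitly, then the single inductive step with the two displays above, and conclude.
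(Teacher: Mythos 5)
Your proof is correct and is essentially the same strong induction the paper uses: base cases from the hypothesis on $x_1,x_2,x_3$, then term-by-term bounding of $ax_{n-1}+bx_{n-2}+cx_{n-3}$ using positivity of $a,b,c$ and the identity $\gamma^3=a\gamma^2+b\gamma+c$. Your explicit remark that the positivity of $a,b,c$ is needed (a hypothesis not stated in the lemma itself but supplied where it is applied) is a fair observation, but does not change the argument.
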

	\begin{proof}
		We prove the lemma using induction on $n$. The result holds for $n=1,2,3$ by the hypothesis. Let $n\geq 4$. Suppose that $\gamma^{m-2}\leq x_m\leq \gamma^{m-1}$ holds for all $m$ where $3\leq m\leq n-1$. Then, 
		$$a\gamma^{m-2}+b\gamma^{m-3}+c\gamma^{m-4}\leq ax_{m}+bx_{m-1}+cx_{m-2}\leq a\gamma^{m-1}+b\gamma^{m-2}+c\gamma^{m-3}$$ which implies
		\begin{align}\label{new-eqn2}
		\gamma^{m-4}(a\gamma^2+b\gamma+c)\leq x_{m+1}\leq \gamma^{m-3}(a\gamma^2+b\gamma+c).
		\end{align}
		Since $\gamma$ is a root of the polynomial  $x^3-ax^2-bx-c$, so \eqref{new-eqn2} gives
		$$\gamma^{m-1}\leq x_{m+1}\leq \gamma^{m}.$$
		This completes the proof.
	\end{proof}
Having Lemma \ref{root} showed, we are ready to prove Theorem \ref{d1}.
	\begin{proof}[Proof of Theorem \ref{d1}]
	Suppose that $x_n=m!$ for some positive integers $n$ and $m$. For $m\leq 5$, it can be checked that the only solutions are  $(n,m)\in \{(1,1),(2,2),(3,3)\}$.
	By Theorem \ref{val}, we have $\nu_2(x_n)=\nu_2(m!)\leq 3+\max\{\nu_2(n),\nu_2(n+1)\}$.
	 By Lemma \ref{l1}, we have
	$$m-\left\lfloor{\frac{\log{m}}{\log{2}}}\right\rfloor-1\leq \nu_2(m!)\leq 3+ 2\max\{ \nu_2(n),\nu_2(n+1),$$
	which yields
	$$ \frac{m}{2}-\frac{\log{m}}{2\log{2}}-\frac{5}{2}\leq \max\{\nu_2(n),\nu_2(n+1)\}.$$
	Therefore,
	$$2^{\frac{m}{2}-\frac{\log{m}}{2\log{2}}-\frac{5}{2}}\leq 2^{ \max\{\nu_2(n),\nu_2(n+1)\}}\leq n+1,$$
	and hence
	\begin{align}\label{1'}
		\frac{m}{2}-\frac{\log{m}}{2\log{2}}-\frac{5}{2}\leq \frac{\log{(n+1)}}{\log{2}}.
	\end{align}
	By Lemma \ref{root}, since $\gamma\approx 2.83$ is a root which satisfies the conditions of Lemma \ref{root}, we have $2.83^{n-2}\leq x_n=m!<\left(\frac{m}{2}\right)^m, \text{ for } m>5$ which gives
	$$(n-2)\log(2.83)<m\log\frac{m}{2}.$$ Hence,
	$$n<\frac{m\log\frac{m}{2}}{\log 2.83}+2.$$ Inserting this bound in equation \eqref{1'}, we get
	$$ \frac{m}{2}-\frac{\log{m}}{2\log{2}}-\frac{5}{2}\leq \frac{\log{(\frac{m\log\frac{m}{2}}{\log 2.83}+2+1)}}{\log{2}}$$ which gives, $m\leq 21$ and hence $n<52$. By a computational search, it can be checked that  $\{(1,1),(2,2),(3,3)\}$ are the only solutions.
\end{proof}
We state a theorem which will be used in the proof of Theorem \ref{d2}.
\begin{thm}\label{ever}\cite[Theorem 2.4]{Everest}
	For any integer non-degenerate linear recurrence sequence $a$ of order $n$ with $r\leq 3$ dominating characteristic roots (roots of the characteristic polynomial with maximum absolute value), 
	$$|a(x)|\geq |\alpha_1|^xx^{-k(a)},x\geq c(a)$$ where $k(a)$ and $c(a)$ are effective constants depending on the sequence and $\alpha_1$ is a dominating characteristic root.
\end{thm}
	\begin{proof}[Proof of Theorem \ref{d2}]
	Suppose that $x_n=m!$ for some positive integers $n$ and $m$.
If Conjecture \ref{con} holds for $p$, there exists $Q$ such that for $i\in\{0,1,\dots,Q-1\}$,  either $\nu_p(x_n)=\kappa_i+\mu_i(\nu_p(n-a_i))$ i.e, (L) holds or $\nu_p(x_n)=\kappa_i$ i.e, (C) holds for $n\equiv i\pmod{Q}$ except for finitely many $n$. Suppose that $\kappa=\max_i\{|\kappa_i|\}$. Define $L=\mu\max_i\{\nu_p(n-a_i)\}$ where $\mu=\max_i\{\mu_i\}$ if the set of $i$ such that (L) holds is nonempty and $L=0$ otherwise. Then, $$\nu_p(x_n)=\nu_p(m!)\leq \kappa+L.$$ By Lemma \ref{l1},
$$\frac{m}{p-1} -\left\lfloor{ \frac{\log m}{\log p}}\right\rfloor -1 \leq \nu_p(m!)\leq\kappa+\mu\max_i\{\nu_p(n-a_i)\}, \text{ when }L\neq 0$$ and 
$$\frac{m}{p-1} -\left\lfloor{ \frac{\log m}{\log p}}\right\rfloor -1 \leq \nu_p(m!)\leq\kappa,\text{ when }L= 0.$$ 
For $L\neq 0$, we have 
$$\frac{m}{\mu(p-1)} -\left\lfloor{ \frac{\log m}{\log p}}\right\rfloor\frac{1}{\mu} -\frac{1+\kappa}{\mu} \leq \max_i\{\nu_p(n-a_i)\}.$$
Therefore,
$$p^{\frac{m}{\mu(p-1)} -\left\lfloor{ \frac{\log m}{\log p}}\right\rfloor\frac{1}{\mu} -\frac{1+\kappa}{\mu}}\leq p^{\max_i\{\nu_p(n-a_i)\} }\leq n+a,\text{ where } a=|\max_i\{-a_i\}|$$
which gives
\begin{align}\label{1}
	\frac{m}{\mu(p-1)} -\left\lfloor{ \frac{\log m}{\log p}}\right\rfloor\frac{1}{\mu} -\frac{1+\kappa}{\mu}\leq \frac{ \log (n+a)}{\log p},\text{ where } a=|\max_i\{-a_i\}|.
\end{align}
    By Theorem \ref{ever}, 
    \begin{align}\label{8}
    	|\gamma|^{n}n^{-k(x)}\leq |x_n| \text{ for } n>c(x)
    	\end{align}
    	 where $x$ denotes the sequence $(x_n)$ and $\gamma$ is a dominating characteristic root such that $|\gamma|>1$. From inequality \eqref{8}, we can calculate a bound for values of $n$ for which $x_n=m!$ for $m\leq5$. Suppose that  $m>5$.
We have $$\frac{|\gamma|^{n}}{n^{k(x)}}\leq |x_n|=x_n=m!<\left(\frac{m}{2}\right)^m$$  which gives
\begin{align}\label{2}
	n\log|\gamma|-k(x)\log n<m\log\frac{m}{2}. 
\end{align}
Combining \eqref{1} and \eqref{2}, we get an effective bound for $m$ and $n$. Similarly, it can be shown that $m$ and $n$ are bounded effectively in the case when $L=0$ as well.  This completes the proof.
	\end{proof}
\begin{rem}
	In \cite{Bilu}, Bilu et al. studied Conjecture \ref{con} for Tribonacci sequences, and they showed that the conjecture fails for a specific infinite set of primes of relative density $1/12$.
	Following the same approach, we now give an example of a set of third order linear recurrence sequences for which Conjecture \ref{con} does not hold.
	\end{rem}
	\begin{ex}\label{exx}
		Consider linear recurrence sequences defined as
		\begin{align*}
		x_{n}=3x_{n-1}+ax_{n-2}+x_{n-3},x_0=0,x_1=1,x_2=3,
		\end{align*}
		where $a\in \mathbb{Z}$ is such that the characterisitic polynomial $P(x)=x^3-3x^2-ax-1$ is irreducible over $\mathbb{Q}$ and has a root $\lambda\in\mathbb{C}$ such that $\frac{\lambda}{P'(\lambda)^3}\notin\mathbb{R}$.  It can be shown that Conjecture \ref{con} fails for an infinite set of primes with relative density $1/12$. 
	Suppose that $\Lambda=\{\lambda_1,\lambda_2,\lambda_3\}$ is the set of roots of $P$ in some extension of $\mathbb{Q}_p$ where $\lambda_2$ and $\lambda_3$ are complex conjugates.
	Now, consider primes $p\equiv 2\pmod{3}$ for which $\Lambda\subset\mathbb{Q}_p$ and $p$ does not divide the discriminant of $P(x)$.  By the Chebotarev density theorem and Dirichlet's theorem, we obtain that such primes have relative density $1\slash 12$. We have, $f_\ell(m)=x_{\ell+mN}$ for $m\in \mathbb{Z}_{\geq 0}$, where $N=N_p$  and $\ell\in\{0,1,\dots,N-1\}$. Consider the function
	\begin{align*} 
	F(x,y,z)&=x^3+y^3+z^3-3xyz\\
	&=(x+y+z)(x+\zeta y+\overline{\zeta}z)(x+\overline{\zeta}y+\zeta z),
	\end{align*} 
	where $\zeta$ and $\overline{\zeta}$ are primitive cube roots of unity.  Since $p\equiv 2\pmod{3}$, $\mathbb{Q}(\lambda_1,\lambda_2,\lambda_3)$ does not contain any primitive cube roots of unity. Moreover, every element of $\mathbb{Z}_p^\times$ has only a single cube root in $\mathbb{Z}_p$. Let $\alpha_i=c_i\lambda_i^{-2/3}$, where $\lambda_i^{1/3}$ is the cube root of $\lambda_i\in \mathbb{Z}_p$.
	Since $\lambda_1+\lambda_2+\lambda_3=3$, we have, $$F(\alpha_1,\alpha_2,\alpha_3)=\frac{-3+\lambda_1+\lambda_2+\lambda_3}{(\lambda_1-\lambda_2)^2(\lambda_2-\lambda_3)^2(\lambda_1-\lambda_3)^2}=0.$$ Therefore, one of the following holds.
	\begin{align}
		c_1\lambda_1^{-2/3}+c_2\lambda_2^{-2/3}+c_3\lambda_3^{-2/3}&=0, \\
		c_1\lambda_1^{-2/3}+\zeta c_2\lambda_2^{-2/3}+\overline{\zeta}c_3\lambda_3^{-2/3}&=0 \label{4},\\ 
		c_1\lambda_1^{-2/3}+\overline{\zeta} c_2\lambda_2^{-2/3}+\zeta c_3\lambda_3^{-2/3}&=0 \label{3}.
	\end{align}
	If \eqref{4} holds, then applying $\sigma\in \text{Gal}(\mathbb{Q}_p(\zeta)/\mathbb{Q}_p)$ where  $\sigma(\zeta)= \overline\zeta$, we obtain \eqref{3}. Hence, we get $(\zeta-\overline{\zeta})(c_2\lambda_2^{-2/3}-c_3\lambda_3^{-2/3})=0$, which yields
	$ \frac{\lambda_2}{P'(\lambda_2)^3}=\frac{\lambda_3}{P'(\lambda_3)^3}$. This is  not possible since $\lambda/ P'(\lambda)^3\notin \mathbb{R}$. Therefore, we have, 	$c_1\lambda_1^{-2/3}+c_2\lambda_2^{-2/3}+c_3\lambda_3^{-2/3}=0$.
	Now, let $\ell$ be such that $-2\slash 3\equiv \ell\pmod{N}$. Then, there exists  $b\in\mathbb{Z}_p\cap\mathbb{Q}$ such that $\frac{-2}{3}=\ell+bN$. Similar  to the proof of Theorem 1.5 in \cite{Bilu},   we can show that $f_\ell(b)=0$. Consider $n\in \mathbb{Z}_{\geq 0}$ such that $n\equiv -2/3 \pmod{p-1}$. Then, $n\equiv \ell \equiv -2/3 \pmod{N}$. Therefore, $n=\ell+Nm$ for $m\in \mathbb{Z}_{\geq 0}  $. Since $f_\ell(b)=0$, we have
	$$\nu_p(x(n))=\nu_p(f_{\ell}(m))=\nu_p(f_{\ell}(m)-f_{\ell}(b)).$$ By Proposition $3.1$ in \cite{Bilu}, we have 
	$$\nu_p(f_{\ell}(m)-f_{\ell}(b))\geq \nu_p(m-b)=\nu_p(n+2/3).$$
	 Therefore, for $n\in \mathbb{Z}_{\geq 0}$, $n\equiv -2/3\pmod{p-1}$, we have $\nu_p{(x_n)}\geq \nu_p{(n+\frac{2}{3})}$. Let $(n_k)$ be a sequence of positive integers satisfying $n_k\equiv -2/3 \pmod{(p-1)p^k}$. If Conjecture \ref{con} is true for $p$, then for some $i\in \{0,\dots,Q-1\}$, the residue class $i\pmod{Q}$ contains infinitely many $n_k$. Since $\nu_p(n_k+2/3)\rightarrow \infty$, we have $\nu_p(x(n))\rightarrow \infty$. Hence, for this $i$ we must have option $(L)$ of Conjecture \ref{con} i.e, $\nu_p(x_{n_k})=\kappa_i+ \mu_i\nu_p(n_k-a_i)$ for some $a_i\in \mathbb{Z}$. Therefore $\nu_p(n_k-a_i)\rightarrow \infty$. But we also have that $\nu_p(n_k+2/3)\rightarrow\infty$ which gives a contradiction since $a_i\neq -2/3$. 
		\end{ex}
		\par Note that, in Example \ref{exx}, for almost all positive integers $a$, $P(x)$ has 3 real roots, providing a method that mostly works for negative $a$. One could consider flipping the sign of $a$.
\begin{rem}
In \cite[Theorem 1.8]{Bilu}, it was shown that the conjecture can be partially, but not completely, rescued by allowing rational twisted zeros in addition to integral twisted zeros.   It would be interesting to explore the same for the Tripell and modified Tripell sequence. Also, in view of Theorem \ref{d2}, it would be interesting to know,   given any third order linear recurrence sequence, does there exist a prime $p$ for which the conjecture holds?
\end{rem}
\section{Acknowledgement}
We are grateful to the referee for going through the article and providing many helpful comments. We are extremely grateful to Professor Florian Luca for many helpful comments while preparing the article.
	
\end{document}